%
% (26/11/2022)
% (04/11/2022)
%
%

\documentclass[reqno,12pt,a4paper]{amsart}
\usepackage{amsmath,amssymb,exscale, amscd}
\usepackage[utf8]{inputenc}
\usepackage[T1]{fontenc}
\usepackage{charter}%(Estilo da fonte)
\usepackage{amsmath}
\usepackage[all]{xy}
\usepackage[english]{babel}
\usepackage{mathrsfs}
\usepackage{mathalfa}
\usepackage{graphicx}
\usepackage{tikz}
\usepackage{epigraph}
\usepackage{setspace}
\usepackage{enumitem}
\usepackage[utf8]{inputenc}
\usepackage{float}
\usepackage{amsthm}
\usepackage{amssymb}
\usepackage{a4wide}
\usepackage{mathtools}
\usepackage{verbatim}

\DeclarePairedDelimiter\floor{\lfloor}{\rfloor}
\usepackage{hyperref}%[backref=page]  
                 \hypersetup{ pdfborder={0 0 0}, 
                              colorlinks=true, 
                              citecolor=blue,
                              linktoc=page,
                              pdfauthor={}, 
                              pdftitle={}}
\renewcommand{\ref}{\hyperref}
\newtheorem{theorem}{Theorem}[section]
\newtheorem{lemma}[theorem]{Lemma}
\newtheorem{corollary}[theorem]{Corollary}
\newtheorem{proposition}[theorem]{Proposition}
\theoremstyle{definition}
\newtheorem{definition}[theorem]{Definition}
\newtheorem{remark}[theorem]{Remark}
\newtheorem{example}[theorem]{Example}

\theoremstyle{plain}
\DeclareMathOperator{\res}{Res}
\DeclareMathOperator{\ver}{Ver}
\DeclareMathOperator{\spec}{Spec}
\DeclareMathOperator{\fract}{Frac}

\makeatletter
\newcommand{\extp}{\@ifnextchar^\@extp{\@extp^{\,}}}
\def\@extp^#1{\mathop{\bigwedge\nolimits^{\!#1}}}
\makeatother
\onehalfspacing

\font\smallrm=cmr7

\author[
\smallrm{W.~Sousa}
]
{W\'allace Sousa}
\title[Discriminant, vertex and general type of a family of polynomials]{Discriminant, vertex and general type of a family of polynomials}

\keywords{Discriminants; family of polynomials; general type; vertex}

\begin{document}

\begin{abstract} In this paper we give a formal expression to the limits of dual plane curves by using the method introduced by Katz. We use this formal expression to give a formula to compute the \textit{vertices} of $C(0)$ a plane curve in $C(t)$ a one-parameter family of plane curves and we show that those vertices, or equivallently the limit of dual plane curves, depend only on the first $\tau$ terms of that family, where $\tau$ is the \textit{general type} of $C(t)$.  \end{abstract}

\maketitle

\section{Introduction} 

\subsection*{The limits of dual plane curves} Let $k$ be an algebraically closed field of characteristic zero, $k[[t]]$ the ring of formal power series and $k((t)):=k[[t]][1/t]$ the field of formal Laurent series and $S:=k[x,y,z]$. Consider $C(t):=Z(F(t)) \subset \mathbb{P}_{k[[t]]}^2$ the family of curves of degree $n$, that is, the family of curves cut out by
$$
F(t)=F_0 +F_1t+F_2t^2 + O(t^3) \in S[[t]],
$$
a homogeneous power series of degree $n>0$, where $F_0\not = 0$ and $O(t^r)$ indicate higher other terms. We may change to a general coordenate system with the following properties:
\begin{enumerate}
\item[1)] $F_0(1,0,0)\not = 0$;
\item[2)] $z=0$ is not tangent to $F_0$.
\end{enumerate}
Now, consider $C^\ast \subset \mathbb{P}_{k((t))}^ 2$ the generic fiber of this family, that is, the plane curve cut out by
$$
F(t)^\ast=F_0 +F_1t+F_2t^2 + O(t^3) \in S((t)).
$$
Supposing that $C^\ast$ is smooth, consider $(C^\ast)^\vee \subset (\mathbb{P}_{k((t))}^2)^\vee$ the usual dual curve of $C^\ast$, where $(\mathbb{P}^2_{k((t))})^\vee$ is the space of lines in $\mathbb{P}_{k((t))}^{2}$. We may ask which plane curve the dual curve to $C^\ast$ degenerates to as $t$ aproches $0$. In other words, what is $\lim_{t \to 0}(C^\ast)^\vee $, the \textit{limit of} $(C^\ast)^\vee$, of the family? This problem is not new, in fact it is surfacing in works by Maillard~\cite{Ma} and Zeuthen~\cite{Z1}, \cite{Z2}.

To compute the limit of the dual plane curves, that is, the schematic boundary of $(C^\ast)^\vee$ in $(\mathbb{P}^2_k)^\vee$ we consider to calculate 
$$
L\cap \lim_{t \to 0}(C^\ast)^\vee
$$
for a general line $L \subset (\mathbb{P}^2_k)^\vee$. Without loss of generality, we may assume that $L$ is the line dual to the point $(1:0:0) \in \mathbb{P}_k^2$. 

 Write $F_0:=\prod_{\nu=1}^mF_{\nu,0}^{n_\nu}$ where the $F_{\nu,0}$ are the irreducible components. So if $F_{\nu,0}$ has degree $d_\nu$, we can by assumption write an equation for $F_{\nu,0}$ as $x^{d_\nu} +$ other terms. Restrict to the affine open $z=1$, we view $y$ as an affine coordinate on $L$, with $y=y_0$ corresponding to the line $\ell_{y_0} :=Z(y-y_0z) \in L$. Also, we may view $F(x)=F(x,y,1,t) \in A[[t]][x]$, where $A:=k[y]$, as a monic polynomial with coefficients in $A[[t]]$ and we consider $\varphi:\spec A[[t]][x] / (F) \rightarrow \spec A[[t]]$ the corresponding morphism.

The discriminant scheme of $\varphi$ is $Z(\Delta(F))$ the zero locus of the discriminant of $F$ (\cite{EH}, Corollary V-23, p.~234) which is flat over $\spec k[[t]]$ if, and only if, $\Delta(F)$ is not divisible by $t$. Write $\Delta(F)=t^\alpha\Delta'$, where $\Delta' \in A[[t]]$ is not divisible by $t$. Thus $Z(\Delta')$ is a subscheme of $ \spec A[[t]]$, flat over $\spec k[[t]]$ and it agrees with $Z(\Delta(F))$ over the generic point of $\spec k[[t]]$. The \textit{special fiber} of $Z(\Delta')$ is clearly defined by $\Delta'_0$ the \textit{leading coefficient} of $\Delta'$. By assumption, $Z(\Delta')$ gives the values $y_0$ of $y$ for which $l_{y_0}$ is tangent to $C^\ast$; so we may view $ L\cap \lim_{t \to 0}(C^\ast)^\vee$ as $ Z(\Delta'_0)$. %Our focus is to give a formal expression to $\Delta'(0)$.

Assume $F_0(x,y,1)=\prod_\nu f_\nu^{n_\nu}$, where $f_\nu:=F_{\nu,0}|_{z=1}$ with $F_{\nu,0}$ the irreducible components of $F_0$ as before. Write $f_\nu=\prod_\ell(x-a_{\nu \ell})$, with $a_{\nu \ell}$ the formal roots in some extension of $\fract(A)$. With this notation Katz proved the following.
\begin{theorem}[\cite{Ka}, Theorem~3]\label{Theorem 1} Assume $F$ regular. Then $$\Delta'_0=\prod_\nu\Delta(f_\nu)^{n_\nu}\prod_{\nu,\ell}\Delta_{f_{\nu}}(F)(a_{\nu \ell})\prod \res(f_\nu,f_{\nu'})^{2(n_\nu+n_{\nu'}-n_\nu n_{\nu'})}.$$ 
\end{theorem}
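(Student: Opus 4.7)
The plan is to decompose $F$ by iterated Hensel lifting along the factorization of its leading term, and to apply multiplicativity of the discriminant to reduce the computation of $\Delta'_0$ to a product of local contributions. Since the irreducible factors $f_\nu$ of $F_0\in \fract(A)[x]$ are pairwise coprime, Hensel's lemma in the $t$-adically complete ring $\fract(A)[[t]]$ lifts the factorization $F_0=\prod_\nu f_\nu^{n_\nu}$ to
$$
F=\prod_\nu F_\nu \qquad \text{in } \fract(A)[[t]][x],
$$
where each $F_\nu$ is monic of degree $n_\nu d_\nu$ in $x$ with $F_\nu|_{t=0}=f_\nu^{n_\nu}$. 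Passing to an extension in which $f_\nu$ splits as $\prod_\ell(x-a_{\nu\ell})$ and applying Hensel a second time, I further factor $F_\nu=\prod_\ell G_{\nu,\ell}$ with $G_{\nu,\ell}$ monic of degree $n_\nu$ and $G_{\nu,\ell}|_{t=0}=(x-a_{\nu\ell})^{n_\nu}$.

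Next, I would apply the classical discriminant--resultant identity at both levels:
$$
\Delta(F)=\prod_\nu \Delta(F_\nu)\cdot\prod_{\nu<\nu'}\res(F_\nu,F_{\nu'})^2,\qquad \Delta(F_\nu)=\prod_\ell \Delta(G_{\nu,\ell})\cdot\prod_{\ell<\ell'}\res(G_{\nu,\ell},G_{\nu,\ell'})^2.
$$
Since distinct irreducibles are coprime and distinct roots $a_{\nu\ell}$ do not coincide, all cross resultants are units modulo $t$: at $t=0$ they specialize to $\res(f_\nu,f_{\nu'})^{n_\nu n_{\nu'}}$ and $(a_{\nu\ell}-a_{\nu\ell'})^{n_\nu^2}$, respectively. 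The only factors with positive $t$-adic valuation are the local discriminants $\Delta(G_{\nu,\ell})$, each vanishing at $t=0$ because $G_{\nu,\ell}|_{t=0}$ has a single root of multiplicity $n_\nu$. The regularity hypothesis on $F$ is exactly what ensures these local $t$-orders are minimal, so that the leading $t$-coefficient of $\Delta(G_{\nu,\ell})$ is identified with the symbol $\Delta_{f_\nu}(F)(a_{\nu\ell})$ in the statement, and the total valuation of $\Delta(F)$ equals $\alpha=\sum_{\nu,\ell} v_t(\Delta(G_{\nu,\ell}))$.

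Assembling the two displayed identities, dividing by $t^\alpha$, and evaluating at $t=0$, each local piece yields one factor $\Delta_{f_\nu}(F)(a_{\nu\ell})$; the inner resultants $\res(G_{\nu,\ell},G_{\nu,\ell'})$ combine (over $\ell<\ell'$, for all $\nu$) into a product of powers of $\prod_{\ell<\ell'}(a_{\nu\ell}-a_{\nu\ell'})^2=\Delta(f_\nu)$; and the outer resultants $\res(F_\nu,F_{\nu'})^2$ specialize to powers of $\res(f_\nu,f_{\nu'})$.

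The main obstacle I anticipate is the exponent bookkeeping at the end. A naive count from the Hensel decomposition produces $\Delta(f_\nu)^{n_\nu^2}$ and $\res(f_\nu,f_{\nu'})^{2n_\nu n_{\nu'}}$, whereas the theorem states $\Delta(f_\nu)^{n_\nu}$ and $\res(f_\nu,f_{\nu'})^{2(n_\nu+n_{\nu'}-n_\nu n_{\nu'})}$. Reconciling these requires a careful analysis showing that the excess powers of $\Delta(f_\nu)$ and $\res(f_\nu,f_{\nu'})$ coming from the Hensel factorization are absorbed into the local symbols $\Delta_{f_\nu}(F)(a_{\nu\ell})$ through the formula that defines them at each formal root. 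Verifying that this absorption is consistent — and that the regularity of $F$ is precisely the condition making the $t$-adic orders and the residual algebraic factors match up as claimed — is, I expect, the technical heart of Katz's argument.
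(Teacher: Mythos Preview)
The paper does not prove Theorem~\ref{Theorem 1} directly; it is quoted from Katz. What the paper does prove is the generalization Theorem~\ref{Theorem 2.7} (with regularity weakened to $\Delta(F)\neq 0$), and Theorem~\ref{Theorem 1} is then recovered from it via Remark~\ref{Remark 2.6}. So the relevant comparison is with the proof of Theorem~\ref{Theorem 2.7}.

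Your Hensel strategy and the paper's argument are two phrasings of the same decomposition: the Hensel factor $G_{\nu,\ell}$ is exactly $\prod_i(x-s_{\nu\ell i})$ over the Puiseux roots with constant term $a_{\nu\ell}$, so $\Delta(G_{\nu,\ell})=\prod_{i<j}(s_{\nu\ell i}-s_{\nu\ell j})^2$. The paper works directly with these root differences rather than naming the Hensel factors, but the combinatorics that follow are identical.

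The genuine error in your sketch is the claim in the third paragraph that the leading $t$-coefficient of $\Delta(G_{\nu,\ell})$ \emph{equals} $\Delta_{f_\nu}(F)(a_{\nu\ell})$. It does not, and this is precisely why your exponents come out as $n_\nu^2$ and $2n_\nu n_{\nu'}$ instead of $n_\nu$ and $2(n_\nu+n_{\nu'}-n_\nu n_{\nu'})$. By Lemma~\ref{Lemma 2.4} one has
\[
\Delta(F,x-a_{\nu\ell})(a_{\nu\ell},t)_0=\Big[\prod_{(\nu',\ell')\neq(\nu,\ell)}(a_{\nu\ell}-a_{\nu'\ell'})^{n_{\nu'}}\Big]^{2n_\nu-2}\cdot\big[\Delta(G_{\nu,\ell})\big]_0,
\]
and Proposition~\ref{Proposition 2.5} introduces a further factor $\prod_{\ell'\neq\ell}(a_{\nu\ell}-a_{\nu\ell'})^{n_\nu(n_\nu-1)}$ when passing from $\Delta(F,x-a_{\nu\ell})$ to $\Delta(F,f_\nu)$; by Remark~\ref{Remark 2.6} the latter agrees with Katz's $\Delta_{f_\nu}(F)$ in the regular case. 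Substituting these relations and cancelling is exactly the three-line computation in the proof of Theorem~\ref{Theorem 2.7}, and it converts $n_\nu^2\to n_\nu$ and $2n_\nu n_{\nu'}\to 2(n_\nu+n_{\nu'}-n_\nu n_{\nu'})$ on the nose. So the ``absorption'' you anticipate is not a subtle consequence of regularity; it is simply the cross-factor built into the \emph{definition} of the formal discriminant. Regularity enters only at the very end, through Remark~\ref{Remark 2.6}, to identify the paper's generalized formal discriminant with Katz's original symbol.
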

If $C\subset \mathbb{P}^2_k$ is an integral curve, then $C^\vee$  the dual curve of $C$ is made up of the usual dual curve and the lines dual to the singular points $P \in C$, with some multiplicity (\cite{EH}, p.~240). If $C,D$ are curves without common components, then we denote by $(C\cdot D)^\vee$ the union of the pencils of lines through the points $C \cap D$, each counted with the same multiplicity that the corresponding point has in the cycle $C \cdot D$.

Going back to the regular families case and defining $C_\nu:=Z(F_{\nu,0})$, we may identify $Z\big(\prod_{\ell}\Delta_{f_\nu}(F)(a_{\nu \ell})\big)$ with $L\cap \big(Z(\Delta_{f_{\nu}}(F))\cdot C_\nu\big)^\vee$, $Z\big(\Delta(f_\nu)\big)$ with $L\cap C_\nu^\vee$ and $Z\big(\res(f_\nu,f_{\nu'})\big)$ with $L\cap (C_\nu\cdot C_{\nu'})^\vee$. But the line $L \subset (\mathbb{P}_k^{2})^\vee$ was a general line by choice of coordinates, so we may interpret Theorem~\ref{Theorem 1} as:

\hspace{-0.4cm}Assume $F$ is regular. Then
$$
\lim_{t \to 0}(C^\ast)^\vee=  \sum_\nu n_\nu C_\nu^\vee + \sum_{\nu}\Big(Z\big(\Delta_{F_{\nu,0}}(F)\big)\cdot C_\nu\Big)^\vee + \sum 2(n_\nu+n_{\nu'}-n_\nu n_{\nu'})(C_\nu\cdot C_{\nu'})^\vee.
$$

\hspace{-0.4cm}Also Katz predicted that (\cite{Ka}, Remark~(3.7))

\begin{center} \textit{The techniques described here can be used to find limits of dual curves of nonregular degenerations. One merely computes more terms in the formal Puiseux expansions (...) The result has the same form as in Theorem: a contribution from the dual of each component, a ``generalized formal part'' and a contribution from each intersection.}
\end{center}   
So it is natural to ask: \begin{enumerate}
	\item[$(i)$] How to determine the ``generalized formal part''?
	\item[$(ii)$] How many terms in the formal Puiseux expasions do we need? Or equivalently, how many terms of $F(t)$ do we need to consider in order to compute $\lim_{t\to 0}(C^\ast)^\vee$?
\end{enumerate}
Our goal is to answer the above questions. To answer the question $(i)$ we consider $\Delta(F,f_\nu)(x,t)$ the \textit{generalized formal discriminant} of $F$ with respect to $f_\nu$ (Definition~\ref{Definition 2.3}).  The generalized formal discriminant is what we need to proof the following.

\begin{theorem}[Theorem~\ref{Theorem 2.7}] \label{Theorem 1.1} \textit{Suppose $\Delta'\not=0$ and $ F_0=\prod_\nu f_\nu^{n_\nu}$, where $f_\nu=\prod_\ell(x-a_{\nu \ell})$ for all $\nu$ with $a_{\nu \ell}\not=a_{\nu'\ell'}$ if $(\nu,\ell)\not=(\nu',\ell')$. Then}
\begin{equation}\label{(0.2.1)}
\Delta'_0=\prod_\nu\Delta(f_\nu)^{n_\nu}\prod_{\nu,\ell}\Delta(F,f_\nu)(a_{\nu \ell},t)_0\prod \res(f_\nu,f_{\nu'})^{2(n_\nu+n_{\nu'}-n_\nu n_{\nu'})}.
\end{equation}
\end{theorem}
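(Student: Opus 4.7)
The plan is to adapt Katz's argument (which proves Theorem~\ref{Theorem 1} in the regular case) by working with the full formal Puiseux expansions rather than with truncations. After passing to a sufficiently ramified extension $A[[t^{1/N}]]$ of $A[[t]]$ and then to its algebraic closure, the polynomial $F$ factors completely as $F=\prod_{\nu,\ell,i}(x-\alpha_{\nu\ell}^{(i)})$, where for each pair $(\nu,\ell)$ the indices $i=1,\dots,n_\nu$ enumerate the Puiseux branches specializing to $a_{\nu\ell}$ as $t\to 0$. Grouping the roots into clusters, set $G_{\nu,\ell}:=\prod_i(x-\alpha_{\nu\ell}^{(i)})$, so $F=\prod_{\nu,\ell}G_{\nu,\ell}$; iterating the multiplicativity identity $\Delta(PQ)=\Delta(P)\,\Delta(Q)\,\res(P,Q)^2$ for monic polynomials then gives
\begin{equation*}
\Delta(F)=\prod_{\nu,\ell}\Delta(G_{\nu,\ell})\cdot\prod_{(\nu,\ell)<(\nu',\ell')}\res(G_{\nu,\ell},G_{\nu',\ell'})^2.
\end{equation*}

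The next step is to read off $t$-valuations and leading coefficients on the right-hand side. The hypothesis that the $a_{\nu\ell}$ are pairwise distinct forces every resultant factor to specialize at $t=0$ to $(a_{\nu\ell}-a_{\nu'\ell'})^{n_\nu n_{\nu'}}\ne 0$, so these factors contribute no power of $t$; collecting them over same-$\nu$ pairs gives $\prod_\nu\Delta(f_\nu)^{n_\nu^2}$, and over cross-$\nu$ pairs gives $\prod_{\nu<\nu'}\res(f_\nu,f_{\nu'})^{2n_\nu n_{\nu'}}$. The cluster discriminants $\Delta(G_{\nu,\ell})$, on the other hand, all vanish at $t=0$ because $G_{\nu,\ell}|_{t=0}=(x-a_{\nu\ell})^{n_\nu}$; the sum of their $t$-orders accounts for the exponent $\alpha$ appearing in $\Delta(F)=t^\alpha\Delta'$, and their leading coefficients $[\Delta(G_{\nu,\ell})]_0$ are the candidates to be identified with the generalized formal discriminants.

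The final step is to reconcile this with \eqref{(0.2.1)}. The target exponents $n_\nu$ on $\Delta(f_\nu)$ and $2(n_\nu+n_{\nu'}-n_\nu n_{\nu'})$ on $\res(f_\nu,f_{\nu'})$ differ from the naive totals $n_\nu^2$ and $2n_\nu n_{\nu'}$ by the positive correction amounts $n_\nu(n_\nu-1)$ and $2(2n_\nu n_{\nu'}-n_\nu-n_{\nu'})$, respectively; these discrepancies, already present in Katz's regular case, must be re-absorbed into the cluster factors, which is precisely the role of Definition~\ref{Definition 2.3}. The main obstacle I anticipate is therefore verifying, directly from the concrete shape of that definition, the identity
\begin{equation*}
\prod_{\nu,\ell}\Delta(F,f_\nu)(a_{\nu\ell},t)_0=\prod_{\nu,\ell}[\Delta(G_{\nu,\ell})]_0\cdot\prod_\nu\Delta(f_\nu)^{n_\nu(n_\nu-1)}\cdot\prod_{\nu<\nu'}\res(f_\nu,f_{\nu'})^{2(2n_\nu n_{\nu'}-n_\nu-n_{\nu'})}.
\end{equation*}
Specializing to $F$ regular should recover $\Delta(F,f_\nu)(a_{\nu\ell},t)_0=\Delta_{f_\nu}(F)(a_{\nu\ell})$ and collapse the whole derivation to Katz's Theorem~\ref{Theorem 1}, which provides a sanity check on the normalization chosen in Definition~\ref{Definition 2.3}.
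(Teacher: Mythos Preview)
Your plan is correct and is essentially the same argument the paper gives: expand $\Delta(F)$ over the Puiseux roots, separate the pairs according to whether their limits coincide, and then rewrite the cluster contributions $[\Delta(G_{\nu,\ell})]_0=\prod_{i<j}(s_{\nu\ell i}-s_{\nu\ell j})_0^2$ in terms of $\Delta(F,f_\nu)(a_{\nu\ell},t)_0$. The identity you flag as the ``main obstacle'' is exactly what the paper isolates as Proposition~\ref{Proposition 2.5} (via Lemma~\ref{Lemma 2.4}); unpacking Definition~\ref{Definition 2.3} at $x=a_{\nu\ell}$ gives $\Delta(F,f_\nu)(a_{\nu\ell},t)_0=\big[\prod_{\nu'\ne\nu,\ell'}(a_{\nu\ell}-a_{\nu'\ell'})^{n_{\nu'}}\big]^{2n_\nu-2}\big[\prod_{\ell'\ne\ell}(a_{\nu\ell}-a_{\nu\ell'})\big]^{n_\nu(n_\nu-1)}[\Delta(G_{\nu,\ell})]_0$, and multiplying over all $(\nu,\ell)$ produces precisely the correction factors $\Delta(f_\nu)^{n_\nu(n_\nu-1)}$ and $\res(f_\nu,f_{\nu'})^{2(2n_\nu n_{\nu'}-n_\nu-n_{\nu'})}$ you wrote down.
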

\hspace{-0.4cm}As we may see in Remark~\ref{Remark 2.6}, if $F$ is a regular family then   
$$
\prod_\ell\Delta(F,f_\nu)(a_{\nu \ell},t)_0=\prod_\ell\Delta_{f_{\nu}}(F)(a_{\nu \ell}),
$$
where $\Delta_{f_{\nu}}(F)$ is the formal discriminant of $F$ with respect to $f_{\nu}$ defined by Katz. In this case, we may rewrite the Equation~\eqref{(0.2.1)} as
$$
\Delta'_0=\prod_\nu\Delta(f_\nu)^{n_\nu}\prod_{\nu,\ell}\Delta_{f_{\nu}}(F)(a_{\nu \ell})\prod \res(f_\nu,f_{\nu'})^{2(n_\nu+n_{\nu'}-n_\nu n_{\nu'})}.
$$
As a consequence of the Theorem~\ref{Theorem 1.1} we may use the Corollary~\ref{Corollary 3.8} and similar identifications as in the regular families case to obtain the next result.

\begin{corollary}\label{Corollary 1.3} Suppose $F(t)$ satisfies $(A_\nu,\Delta_{\nu,n(\nu)})$-Zd for each $f_\nu$ with $n_\nu>1$, that is, suppose that for each $n_\nu>1$ there exists a decomposition 
$$A_\nu F(t)=(F_{\nu,0} B_{\nu,0}+B_{\nu,1}t^1+\cdots + B_{\nu,n(\nu)}t^{n(\nu)-1})^{n_\nu}+\Delta_{\nu,n(\nu)}t^{n(\nu)}+O(t^{n(\nu)+1}), $$
where $A_l,B_{i,j},\Delta_{r,s} \in S$ are homogeneous polinomials such that $\gcd(A_\nu\Delta_{\nu,n(\nu)},F_{\nu,0})=1$. Then 
\begin{align*} \lim_{t \to 0}(C^\ast)^\vee =  \sum_\nu n_\nu C_\nu ^\vee & + \sum_{\nu}\Big[\sum_{\nu'\not = \nu}n_{\nu'}\big(C_\nu\cdot C_{\nu'}\big)^\vee\Big] \\ &  +  \sum_{\nu}(n_\nu-1)\Big[\Big(Z\big(\Delta_{\nu,n(\nu)}\big)\cdot C_\nu\Big)^\vee - \Big(Z(A_\nu)\cdot C_\nu\Big)^\vee\Big].
\end{align*}
\end{corollary}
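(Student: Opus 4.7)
The plan is to apply Theorem~\ref{Theorem 1.1} verbatim, to rewrite each middle factor $\prod_\ell \Delta(F,f_\nu)(a_{\nu\ell},t)_0$ by means of Corollary~\ref{Corollary 3.8} under the Zd hypothesis, and then to pass to the dual-plane picture via the same identifications that converted Theorem~\ref{Theorem 1} into its geometric statement for regular families.

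First, Theorem~\ref{Theorem 1.1} gives
$$
\Delta'_0=\prod_\nu\Delta(f_\nu)^{n_\nu}\prod_{\nu,\ell}\Delta(F,f_\nu)(a_{\nu \ell},t)_0\prod \res(f_\nu,f_{\nu'})^{2(n_\nu+n_{\nu'}-n_\nu n_{\nu'})}.
$$
For each $\nu$ with $n_\nu=1$ the middle factor contributes a unit (the generalized formal discriminant degenerates when $f_\nu$ is a simple factor of $F_0$), and for each $\nu$ with $n_\nu>1$ I would invoke Corollary~\ref{Corollary 3.8}, whose role is precisely to express $\Delta(F,f_\nu)(a_{\nu\ell},t)_0$ explicitly in terms of $A_\nu$, $\Delta_{\nu,n(\nu)}$, and cross terms built from the other $f_{\nu'}$, under the $(A_\nu,\Delta_{\nu,n(\nu)})$-Zd decomposition. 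The assumptions $\gcd(A_\nu\Delta_{\nu,n(\nu)},F_{\nu,0})=1$ and $a_{\nu\ell}\neq a_{\nu'\ell'}$ (for $(\nu,\ell)\neq(\nu',\ell')$) ensure that all the specializations $A_\nu(a_{\nu\ell})$ and $\Delta_{\nu,n(\nu)}(a_{\nu\ell})$ are nonzero units, so that expression is meaningful.

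Next, I would apply the geometric dictionary already used between Theorem~\ref{Theorem 1} and the regular-family formula: $Z(\Delta(f_\nu))\leftrightarrow L\cap C_\nu^\vee$; $Z(\res(f_\nu,f_{\nu'}))\leftrightarrow L\cap(C_\nu\cdot C_{\nu'})^\vee$; and $Z\bigl(\prod_\ell G(a_{\nu\ell})\bigr)\leftrightarrow L\cap(Z(G)\cdot C_\nu)^\vee$ for any homogeneous $G\in S$ coprime to $F_{\nu,0}$. The first product in Theorem~\ref{Theorem 1.1} produces $\sum_\nu n_\nu C_\nu^\vee$, while the ``diagonal'' part of Corollary~\ref{Corollary 3.8} (the powers of $A_\nu$ and $\Delta_{\nu,n(\nu)}$ evaluated at $a_{\nu\ell}$) contributes $\sum_\nu (n_\nu-1)\bigl[\bigl(Z(\Delta_{\nu,n(\nu)})\cdot C_\nu\bigr)^\vee-\bigl(Z(A_\nu)\cdot C_\nu\bigr)^\vee\bigr]$, matching two of the three summands in the statement.

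Finally, the last product of resultants in Theorem~\ref{Theorem 1.1}, together with the cross terms in $\res(f_\nu,f_{\nu'})$ extracted from Corollary~\ref{Corollary 3.8}, must combine into the intersection sum $\sum_\nu \sum_{\nu'\neq\nu} n_{\nu'}(C_\nu\cdot C_{\nu'})^\vee$. Per unordered pair $\{\nu,\nu'\}$, this requires the total exponent of $\res(f_\nu,f_{\nu'})$ to equal $n_\nu+n_{\nu'}$; since Theorem~\ref{Theorem 1.1} already supplies $2(n_\nu+n_{\nu'}-n_\nu n_{\nu'})$, the Zd-dependent cross terms must contribute exactly $2n_\nu n_{\nu'}-n_\nu-n_{\nu'}$. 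I expect this bookkeeping to be the main obstacle: reading off the precise exponent of $\res(f_\nu,f_{\nu'})$ produced by Corollary~\ref{Corollary 3.8} when $\Delta(F,f_\nu)(a_{\nu\ell},t)_0$ is expanded at the roots of $f_{\nu'}$, and verifying that the double sum over $\nu$ and $\ell$ telescopes to exactly the value above. Once this identity is in place, assembling the three contributions (dual components, Zd diagonal, and corrected intersection terms) yields the asserted formula for $\lim_{t\to 0}(C^\ast)^\vee$.
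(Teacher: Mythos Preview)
Your approach is correct and coincides with the paper's: the paper gives no separate proof of Corollary~\ref{Corollary 1.3} but simply remarks that it follows from Theorem~\ref{Theorem 1.1} together with Corollary~\ref{Corollary 3.8} and the same geometric identifications used in the regular case. One small clarification: Corollary~\ref{Corollary 3.8} does not merely evaluate each individual factor $\Delta(F,f_\nu)(a_{\nu\ell},t)_0$; it already packages the whole product $\Delta'_0$ in the closed form
\[
\Delta'_0=\prod_\nu \Delta(f_\nu)^{n_\nu}\cdot \frac{c\,\prod_\nu \res\bigl(f_\nu,\delta_{\nu,n(\nu)}^{\,n_\nu-1}\bigr)\,\prod_\nu\prod_{\nu'\neq\nu}\res(f_\nu,f_{\nu'})^{n_{\nu'}}}{\prod_\nu \res\bigl(f_\nu,\alpha_\nu^{\,n_\nu-1}\bigr)},
\]
so the cross-term bookkeeping you anticipate (that the Zd contribution to the exponent of $\res(f_\nu,f_{\nu'})$ is exactly $2n_\nu n_{\nu'}-n_\nu-n_{\nu'}$) is precisely the computation carried out in the proof of Theorem~\ref{Theorem 3.7}, and is already absorbed into Corollary~\ref{Corollary 3.8}. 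Applying the dictionary to each of the four resultant factors above gives the four summands of Corollary~\ref{Corollary 1.3} directly, with no residual identity to check.
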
 
It is not hard to see that if a family $F(t)$ is a \textit{degeneration along to a quasi-general direction} or if $F(t)$ is a \textit{Zeuthen family} then $F(t)$ satisfies the hypothesis of Corollary~\ref{Corollary 1.3}. Thus Corollary~\ref{Corollary 1.3} generalizes (\cite{ENW}, Corollary~6.2 and Corollary 7.6).

\subsection*{The vertices of a plane curve} We say that a point $P\in C(0):=Z(F_0)$ is a \textit{vertex} of $C(0)$ in $C(t)$ if $P^\vee$ is a component of $\lim_{t \to 0}(C^\ast)^\vee$. By Theorem~\ref{Theorem 1.1} we conclude that $\Delta'_0$ depends on the contribution of the dual of each component of the special fiber and the another part which we call it by \textit{vertex} of the family $F(t)$ and denote it by $\ver(F)$ (see Definition~\ref{Definition 3.1}). The vertex is a formal expression to the vertices of $C(0)$ in $C(t)$.

%In the next three examples we will use Zeuthen's terminology (\cite{Z1}).

\begin{example}[\cite{Z1}, ``first kind''] Let  $C(t)$ be the family of curves of degree $n$ cut out by
$$F(t)=x^2A_{n-2} + B_nt+ O(t^2) \in S[[t]],$$
where $x$ does not divide $A_{n-2}$ nor $B_n$ and the plane curve cut out by $A_n$ is smooth. By Proposition~\ref{Proposition 3.2} we have that
$$\ver(F)=\res(x,-4B_nA_{n-2}^3).$$ The vertices of $C(0)$ in $C(t)$ are given by $$\big(Z(x)\cdot Z(B_n A_{n-2}^3)\big).$$
\end{example}

\begin{example}[\cite{Z1}, ``second kind''] Let $C(t)$ be the family of curves of degree $n$ cut out by
$$
F(t)=x^2A_{n-2} + 2xB_{n-1}t + C_nt^2 + O(t^3) \in S[[t]],
$$
where $x$ does not divide $A_{n-2}(B_{n-1}^2-A_{n-2}C_n)$ and the plane curve cut out by $A$ is smooth. By Proposition~\ref{Proposition 3.3} we have that
$$
\ver(F)=\res\Big(x,4(B_{n-1}^2-A_{n-2}C_n)A_{n-2}^2\Big).
$$
The vertices of $C(0)$ in $C(t)$ are given by $$\Big(Z(x)\cdot Z\big((B_{n-1}^2-A_{n-2}C_n)A_{n-2}^2\big)\Big).$$
\end{example}
 
\begin{example}[\cite{Z1}, ``third kind''] Let $C(t)$ be the family of curves of degree $n$ cut out by
$$
F(t)=x^2A_0+x^3A_1+2xB_0t+x^2B_1t+C_0t^2+xC_1t^2+D_1t^3+O(t^4) \in S[[t]],
$$
where $\res(x,B_0^2-A_0C_0)=0$, $x$ does not divide $A_{0}(-A_1B_0^3 + B_1B_0^2A_0 - C_1B_0A_0^2 + D_1A_0^3)$ and the plane curve cut out by $xA_1+A_0$ is smooth. By Proposition~\ref{Proposition 3.4} we have that
$$
\ver(F)=\res\Big(x,-4\big(-A_1B_0^3 + B_1B_0^2A_0 - C_1B_0A_0^2 + D_1A_0^3\big)\Big).
$$
The vertices of $C(0)$ in $C(t)$ are given by $$\Big(Z(x)\cdot Z\big(-A_1B_0^3 + B_1B_0^2A_0 - C_1B_0A_0^2 + D_1A_0^3\big) \Big).$$
\end{example}

\subsection*{The general type of a family of polynomials} We may note that by Theorem~\ref{Theorem 1.1} the question $(ii)$ is equivalent to the following one: \begin{enumerate}
	\item[(ii)'] How many terms of $F(t)$ do we need to consider in order to compute $\ver(F)$? 
\end{enumerate}
Now, by the above examples we have that $\ver(F)$ the vertex of the families of plane curves cut out by $F(t)=\sum_{i\geq 0} F_it^i$ a family of polynomials  of the \textit{first kind}, \textit{second kind}, or \textit{third kind} depends only on $(F_0,F_1)$, $(F_0,F_1,F_2)$ or $(F_0,F_1,F_2,F_3)$, respectively. Thus we have an answer for the question $(ii)'$ in these particular cases. In a general context, that is, if $C(t)$ is a general family of plane curves cut out by $F(t)$ a homogeneous power series then the \textit{general type} (Definition~\ref{Definition 4.8}) is the answer for the question $(ii)'$ as we may see in Theorem~\ref{Theorem 4.7}.

\subsection*{Setup} In Section 2 we give a formal expression to $\Delta'_0$ in terms of the formal general discriminant. In Section 3 we derive from this formal expression a formula to the vertex of a family of polynomials. We calculate the vertex of families of the first, second and third kinds in Zeuthen terminology. In Section 4 we show that the general type of a family of polynomials indicates how many terms of this family we need to consider in order to compute it's vertex. In addition, we calculate the general type of some families of polynomials.

\section{Generalized formal discriminant of a family of polynomials}

Let $k$ be an algebraically closed field of characteristic zero and $A$ a $k$-algebra and integral domain. A \textit{family of polynomials} of degree $n'$ with coefficients in $A$ is an expression
$$
F(x,t)=\sum_{i,j \geq 0} a_{i,j}t^ix^j=\sum_{i \geq 0} p_i(x)t^i,
$$
with $p_i(x) \in A[x]$, $\deg(p_0)=n'$ and $\deg(p_i(x))\leq n'$, for $i >0$. Suppose that $x^2 \mid p_0(x)$ but $x^2 \nmid F(t)$. Consider $P$ the Newton polygon of $F(t)$, that is, the convex hull of $\bigcup_{a_{i,j}\not = 0}[(i,j)+\mathbb{R}^2_{\geq 0}]$. Consider $d$, the largest integer such that $x^{d} \mid p_0(x)$. Denote the \textit{slope} of the \textit{leading edge} of $P$ through $(0,d)$ by $-1/m_1$. Now \textit{truncate} $F(t)$ by including  only the terms of $F(t)$ corresponding to the lattice on that leading edge to obtain
$$
F_1'=\sum a_{m_1(d-j),j } t^{m_1(d-j)}x^j,
$$
the \textit{first truncation} of $F(t)$. Finally, define $P_{F_1'}(s):=\sum a_{m_1(d-j),j}s^j \in A[s].$ Let $\alpha_{1,1},...,\alpha_{1,d}$ be the formal roots of $P_{F_1'}(s)$. The result is the formal \textit{Puiseux expansions}
$$
x=s_i=t^{m_{1,i}}(\alpha_{1,i} + t^{m_{2,i}}(\alpha_{2,i} + \cdots
$$
which we will see as $t$-roots of $F$, with $m_{1,i}:=m_1$ and $i=1,...,d$. Now, write 
$$
p_0(x)=a\prod_{i=1}^{n'-d}(x-a_i), \hspace{0.5cm} a_i\not = 0.
$$
The \textit{branches} of $F(t)$ are
$$
x=s_i=t^{m_{1,i}}(\alpha_{1,i} + t^{m_{2,i}}(\alpha_{2,i} + \cdots \hspace{0.5cm} (1\leq i \leq d)
$$
$$
x=s_{d+i}=a_i+t^{m_{1,d+i}}(\alpha_{1,d+i} + t^{m_{2,d+i}}(\alpha_{2,d+i} + \cdots \hspace{0.5cm} (1\leq i \leq n'-d).
$$

\begin{comment}
(note that the first truncation depends only on the first $\floor{m_1\cdot d}$ terms of $F(t)$)	
\end{comment}

\begin{definition}\label{Definition 2.1} Let $f \in A[[t]]$ be non-zero. The \textit{limit of} $f$ as $t\rightarrow 0$, denoted by $f_0$, is the leading coefficient of $f$ as a formal power series in $t$.
\end{definition}

\begin{remark}\label{Remaark 2.2} Let $ h(x)= a_nx^n+\cdots + a_0 \in A[x]$ be a degree $n$ polynomial ($a_n\not =0$). Let $\alpha_1,...,\alpha_n$ be the formal roots of $h(x)$, that is, $h(x)=a\prod_{i=1}^n(x-\alpha_i)$. The \textit{discriminant} of $h$ is given by
%\begin{equation}\label{1.2.1}
$$\Delta_A\big(h(x)\big)=a_n^{2n-2}\prod_{i<j}(\alpha_i-\alpha_j)^2.$$
%\end{equation}
If $g(x)=b_m\prod_{j=1}^m(x-\beta_j)$, then the \textit{resultant} of $h$ and $g$ is given by $$\res (h,g)=a_n^mb_m^n\prod_{i,j}(\alpha_i-\beta_j)=a_n^m\prod_ig(\alpha_i).$$
\end{remark}

Let $F(x,t)=\sum_{i \geq 0} F_i(x)t^i$ be a family of polynomials of degree $n'$ with coefficients in $A$ and let $F_0=\prod_\nu f_\nu(x)^{n_\nu}$, where $f_\nu$ are coprime and monic polynomials. Suppose $s_1(t),...,s_{n'}(t)$ be the $t$-roots of $F(t)$ as before and fixe $\nu$. Then we may write 
$$
F=\prod_\ell(x-s_\ell)\prod_{i=1}^{n_\nu}\big(f_\nu - s_i'(x,t)\big),
$$
where $f_\nu\big((s_\ell)_0\big)\not = 0$. In this case, we define $$ F_{(f_\nu)}(w):=\prod_\ell(x-s_\ell)\prod_{i=1}^{n_\nu}\big[w - s_i'(x,t)\big] \in B[w],$$
where $B:=\overline{K}[[t]][x]$, with $K:=\fract{A}$ and $\overline{K}$ is the algebraic closure of $K$.

\begin{definition}\label{Definition 2.3} The \textit{generalized formal discriminant} of $F(x,t)$ with respect to $f_\nu(x)$ is defined by
$$
\Delta(F,f_\nu)(x,t):=\Delta_B\big(F_{(f_\nu)}(w)\big)=\Big[\prod_\ell(x-s_\ell)\Big]^{2n_\nu-2}\prod_{i<j}\big[s_{i}'(x,t)-s_{j}'(x,t)\big]^2 .
$$
\end{definition}

\begin{lemma}\label{Lemma 2.4} Suppose $\Delta_{A[[t]]}(F)\not=0$ and $ F_0=\prod_\nu(x-a_{\nu})^{n_\nu}$, with $a_{\nu}\not=a_{\nu'}$ if $\nu\not = \nu'$ and let
$$
s_i=a_\nu + \cdots \hspace{0.25cm} \textrm{for }i=1,...,n_\nu \hspace{0.5cm} \textrm{ and } \hspace{0.5cm} s_{n_{\nu}+1},...,s_{n'}
$$
be the roots of $F$. The leading coefficient of $\Delta(F,x-a_v)(a_v,t)$ satisfies the following equation
$$
\Delta(F,x-a_{\nu})(a_{\nu},t)_0 = \Big[\prod_{ \nu'\not = \nu}(a_\nu-a_{\nu'}) ^{n_{\nu'}}\Big]^{2n_\nu-2}\prod_{1\leq i < j \leq n_\nu}(s_i-s_j)_0^2
$$
\end{lemma}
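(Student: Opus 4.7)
The plan is to unpack Definition~\ref{Definition 2.3} in the special case $f_\nu(x)=x-a_\nu$ and then evaluate each factor directly at $x=a_\nu$.

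First I would identify the auxiliary functions $s_i'(x,t)$. Since $F=\prod_{k=1}^{n'}(x-s_k)$, the defining factorization forces
$$\prod_{i=1}^{n_\nu}\bigl((x-a_\nu)-s_i'(x,t)\bigr)=\prod_{i=1}^{n_\nu}(x-s_i),$$
so one may take $s_i'(x,t)=s_i(t)-a_\nu$, independent of $x$. Substituting this into Definition~\ref{Definition 2.3} and noting $(s_i-a_\nu)-(s_j-a_\nu)=s_i-s_j$, the generalized formal discriminant reduces to
$$\Delta(F,x-a_\nu)(x,t)=\Bigl[\prod_{\ell=n_\nu+1}^{n'}(x-s_\ell)\Bigr]^{2n_\nu-2}\prod_{1\le i<j\le n_\nu}(s_i-s_j)^2.$$

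Next I would set $x=a_\nu$ and pass to the leading $t$-coefficient. Each $s_\ell$ with $\ell>n_\nu$ satisfies $(s_\ell)_0\neq a_\nu$ by hypothesis, and specializes to some $a_{\nu'}$ with $\nu'\neq\nu$; by the structure of $F_0$ these specializations contribute exactly $n_{\nu'}$ such factors, whence
$$\Bigl[\prod_{\ell>n_\nu}(a_\nu-s_\ell)\Bigr]_0=\prod_{\nu'\neq\nu}(a_\nu-a_{\nu'})^{n_{\nu'}}.$$
The assumption $\Delta_{A[[t]]}(F)\neq 0$ guarantees that all the $s_i$ are distinct, so every $s_i-s_j$ is a nonzero Puiseux series and the leading coefficient is multiplicative across the whole product. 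Raising the first bracket to the power $2n_\nu-2$ and combining gives the claimed identity.

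The only step requiring genuine care is the identification $s_i'(x,t)=s_i(t)-a_\nu$, which is available precisely because $f_\nu$ is linear and allows each $x-s_i$ to be rewritten in the template of Definition~\ref{Definition 2.3}; the remainder of the argument is a straightforward multiplicative computation of limits of nonzero formal series, and I do not foresee a serious obstacle.
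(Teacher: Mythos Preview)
Your proposal is correct and follows essentially the same argument as the paper: identify $s_i'(x,t)=s_i(t)-a_\nu$, apply Definition~\ref{Definition 2.3}, set $x=a_\nu$, and read off the leading coefficient factor by factor. The only difference is that you add a line justifying multiplicativity of the leading coefficient via $\Delta_{A[[t]]}(F)\neq 0$, which the paper leaves implicit.
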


\begin{proof} Note that
$$
F_{(x-a_\nu)}(w)= \prod_{\ell> n_\nu}(x-s_\ell)\prod_{i=1}^{n_\nu}\big[w-(s_i-a_\nu)\big].
$$
By Definition~\ref{Definition 2.3} we have
$$
\Delta(F,x-a_\nu)(x,t)=\Big[\prod_{\ell>n_\nu}(x-s_\ell)\Big]^{2n_\nu-2}\prod_{1\leq i<j\leq n_\nu}(s_i-s_j)^2.
$$
The last equation implies 
$$
\Delta(F,x-a_\nu)(a_\nu,t)=\Big[\prod_{\ell>n_\nu}(a_\nu-s_\ell)\Big]^{2n_\nu-2}\prod_{1\leq i<j\leq n_\nu}(s_i-s_j)^2
$$
and
$$
\Delta(F,x-a_\nu)(a_\nu,t)_0=\Big[\prod_{\nu'\not =\nu}(a_\nu-a_{\nu'})^{n_{\nu'}}\Big]^{2n_\nu-2}\prod_{1\leq i<j\leq n_\nu}(s_i-s_j)_0^2,
$$
as desired.
\end{proof}

\begin{proposition}\label{Proposition 2.5} Suppose $\Delta_{A[[t]]}(F)\not=0$ and $F_0=\prod_\nu f_\nu^{n_\nu}$, where $f_\nu=\prod_\ell(x-a_{\nu \ell})$ with all $a_{\nu \ell}$ distincts. Then
$$
\Delta(F,x-a_{\nu \ell})(a_{\nu \ell},t)_0=\prod_{\ell' \not = \ell}(a_{\nu \ell}-a_{\nu \ell'})^{n_\nu(n_{\nu}-1)}\Delta(F,f_\nu)(a_{\nu \ell},t)_0.
$$
\end{proposition}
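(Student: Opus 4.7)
My plan is to unfold the left-hand side via Lemma~\ref{Lemma 2.4} and the right-hand side via Definition~\ref{Definition 2.3}, reducing the desired equality to a pair-wise leading-coefficient identity involving the constant $c := f_\nu'(a_{\nu\ell}) = \prod_{\ell' \neq \ell}(a_{\nu\ell} - a_{\nu\ell'})$. Throughout I label the $f_\nu$-branches so that $s_{i,\ell''}$ (for $i = 1, \ldots, n_\nu$ and $\ell'' = 1, \ldots, d_\nu$) is the root of the $i$-th group factor with $(s_{i,\ell''})_0 = a_{\nu\ell''}$, and I set $u_i := s_{i,\ell} - a_{\nu\ell}$.

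Applying Lemma~\ref{Lemma 2.4} to the refined factorization $F_0 = \prod_{\nu',\ell'}(x - a_{\nu'\ell'})^{n_{\nu'}}$ yields
\[
\Delta(F, x - a_{\nu\ell})(a_{\nu\ell}, t)_0 = \Big[\prod_{(\nu',\ell') \neq (\nu,\ell)}(a_{\nu\ell} - a_{\nu'\ell'})^{n_{\nu'}}\Big]^{2n_\nu - 2} \prod_{i<j}(s_{i,\ell} - s_{j,\ell})^2_0,
\]
and the $\nu' = \nu$ contributions split off an explicit prefactor $c^{n_\nu(2n_\nu-2)}$. For the right-hand side, Definition~\ref{Definition 2.3} together with the identity $f_\nu - s_i'(x,t) = \prod_{\ell''}(x - s_{i,\ell''})$ and $f_\nu(a_{\nu\ell}) = 0$ gives $s_i'(a_{\nu\ell}, t) = u_i Q_i$ where $Q_i := \prod_{\ell'' \neq \ell}(a_{\nu\ell} - s_{i,\ell''})$ has $(Q_i)_0 = c$, and produces the same common factor $\bigl[\prod_{\nu' \neq \nu,\, \ell'}(a_{\nu\ell} - a_{\nu'\ell'})^{n_{\nu'}}\bigr]^{2n_\nu - 2}$.

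The central algebraic step is to show that $(u_i Q_i - u_j Q_j)_0 = c \cdot (u_i - u_j)_0$ for every pair $i < j$. I would write
\[
u_i Q_i - u_j Q_j = c(u_i - u_j) + u_i(Q_i - Q_j) + (u_i - u_j)(Q_j - c),
\]
so that the term $(u_i - u_j)(Q_j - c)$ is immediately of strictly larger $t$-valuation than $c(u_i - u_j)$ because $v(Q_j - c) > 0$. The main obstacle is to show $v(u_i(Q_i - Q_j)) > v(u_i - u_j)$; since $v(u_i) > 0$, this reduces to $v(Q_i - Q_j) \geq v(u_i - u_j)$, which I would prove by exploiting the fact that $s_{i,\ell''}$ and $s_{j,\ell''}$ are roots of the polynomials $f_\nu - s_i'$ and $f_\nu - s_j'$ (differing only by $s_j' - s_i'$), so that local linearization of $f_\nu$ at the simple root $a_{\nu\ell''}$ controls each $v(s_{i,\ell''} - s_{j,\ell''})$ uniformly in $\ell''$ by $v(s_i' - s_j')$, and in particular by $v(u_i - u_j) = v(s_{i,\ell} - s_{j,\ell})$; the hypothesis $\Delta_{A[[t]]}(F) \neq 0$ ensures these Puiseux branches are all distinct, so none of the differences vanish.

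Once the pair-wise identity is established, squaring and taking the product over the $\binom{n_\nu}{2}$ pairs introduces the factor $c^{n_\nu(n_\nu - 1)}$; combined with the matching of the common prefactors on both sides, this yields precisely the stated equality.
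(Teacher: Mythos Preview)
Your approach mirrors the paper's: both compute $\Delta(F,f_\nu)(a_{\nu\ell},t)_0$ by expressing each $s_i'(a_{\nu\ell},t)$ through the grouped $t$-roots and then compare with Lemma~\ref{Lemma 2.4}. The paper does this via the first-order expansion
\[
s_i'(x,t)=\sum_{\ell''}\Big[\prod_{\ell'\neq\ell''}(x-a_{\nu\ell'})\Big](s_{\nu\ell'' i}-a_{\nu\ell''})+\cdots,
\]
evaluates at $x=a_{\nu\ell}$, and simply replaces each $s_i'(a_{\nu\ell})-s_j'(a_{\nu\ell})$ by $c\,(s_{\nu\ell i}-s_{\nu\ell j})$, then reads off Equation~\eqref{(1.5.2)}; your route through the closed form $s_i'(a_{\nu\ell})=u_iQ_i$ is the same computation in different clothing.

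Where you go beyond the paper is in isolating the identity $(u_iQ_i-u_jQ_j)_0=c\,(u_i-u_j)_0$ as ``the main obstacle''; the paper sweeps this under the ``$\cdots$'' without comment. Your proposed justification, though, is not quite complete. Linearizing $f_\nu$ at $a_{\nu\ell''}$ yields $v(s_{i,\ell''}-s_{j,\ell''})=v\big((s_i'-s_j')(s_{j,\ell''},t)\big)$, i.e.\ the valuation of $s_i'-s_j'$ at a \emph{different} root, and passing from there to $v(u_i-u_j)=v(s_{i,\ell}-s_{j,\ell})$ is exactly the uniformity you are trying to prove, so the argument is circular. In fact, for an \emph{arbitrary} grouping of the $n_\nu d_\nu$ roots into $n_\nu$ packets the desired inequality $v(Q_i-Q_j)\ge v(u_i-u_j)$ can fail: take $F_0=x^2(x-1)^2$ with branches near $0$ agreeing to $t$-order~$3$ while those near~$1$ separate already at order~$1/2$. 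The clean fix is to use the Galois-coherent labeling available when $f_\nu$ is irreducible over $K$: choosing $s_{i,\ell''}=\sigma(s_{i,\ell})$ for $\sigma$ sending $a_{\nu\ell}\mapsto a_{\nu\ell''}$ forces $v(s_{i,\ell''}-s_{j,\ell''})=v(u_i-u_j)$ for every $\ell''$, and then your estimate goes through immediately. Neither you nor the paper spells this out.
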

\begin{proof} Let
$$
s_{\nu \ell i_\nu}= a_{\nu \ell} + \cdots, \ i_\nu=1,...,n_\nu
$$
be the roots of $F$. So
\begin{align*}
F	= & \prod_{\nu'\not = \nu}\big[x - s_{\nu'\ell'i_{\nu'}}(t)\big] \prod_{i_\nu=1}^{n_\nu}\Big[\prod_\ell\big[x - s_{\nu \ell i_\nu}(t)\big]\Big]\\
	=& \prod_{\nu'\not = \nu}\big[x - s_{\nu'\ell'i_{\nu'}}(t)\big] \prod_{i_\nu=1}^{n_\nu}\Big[\prod_\ell\big[x-a_{\nu \ell} - \big(s_{\nu \ell i_\nu}(t)-a_{\nu \ell}\big)\big]\Big]\\
	=& \prod_{\nu'\not = \nu}\big[x - s_{\nu'\ell'i_{\nu'}}(t)\big] \prod_{i_\nu=1}^{n_\nu}\Big[f_\nu - \sum_\ell\big[\prod_{\ell'\not=\ell}(x-a_{\nu \ell'})(s_{\nu \ell i_\nu}(t)-a_{\nu \ell})\big] + \cdots\Big].
\end{align*}
By Definition~\ref{Definition 2.3} we have
\begin{align*}
\Delta(F,f_\nu)(a_{\nu \ell},t)		= & \Big[\prod_{\nu'\not = \nu}\big(a_{\nu \ell} - s_{\nu'\ell'i_{\nu'}}(t) \big) \Big]^{2n_\nu-2} \prod_{i_\nu\not = j_\nu}\Big[\prod_{\ell'\not = \ell}(a_{\nu \ell} - a_{\nu \ell'})(s_{\nu \ell i_\nu} - s_{\nu \ell j_\nu}) \Big] \\ 
									= & \Big[\prod_{\nu' \not= \nu}(a_{\nu \ell}-
a_{\nu' \ell'})^{n_{\nu'}} \Big]^{2n_\nu-2} \Big[\prod_{\ell'\not = \ell}(a_{\nu \ell} - a_{\nu \ell'}) \Big]^{n_\nu(n_\nu-1)} \prod_{i_\nu<j_\nu}(s_{\nu \ell i_\nu}-s_{\nu \ell j_\nu})^2
\end{align*}
and consequently
$$
\Delta(F,f_\nu)(a_{\nu \ell},t)_0= \Big[\prod_{\nu' \not= \nu}(a_{\nu \ell}-
a_{\nu' \ell'})^{n_{\nu'}} \Big]^{2n_\nu-2} \Big[\prod_{\ell'\not = \ell}(a_{\nu \ell} - a_{\nu \ell'}) \Big]^{n_\nu(n_\nu-1)}$$
\begin{equation}\label{(1.5.2)}
	\prod_{i_\nu<j_\nu}(s_{\nu \ell i_\nu}-s_{\nu \ell j_\nu})_0^2.
\end{equation}
Now, the desired result follows by using Lemma~\ref{Lemma 2.4}.
\end{proof}

\begin{remark}\label{Remark 2.6} Note that by the proof of (\cite{Ka}, Lemma~2) and by Lemma~\ref{Lemma 2.4} we have that if $F$ is \textit{regular} then $\Delta_{(x-a_{\nu \ell})}(F)(a_{\nu \ell})$ the \textit{formal discriminant} of $F$ with respect to $x-a_{\nu \ell}$ satisfies the following expression \begin{equation}\label{(1.6.2)}
	\Delta_{(x-a_{\nu \ell})}(F)(a_{\nu \ell})=\Delta(F,x-a_{\nu \ell})(a_{\nu \ell},t)_0. 
\end{equation}
Now we may use (\cite{Ka}, Lemma~4), Equation~\eqref{(1.6.2)} and Proposition~\ref{Proposition 2.5} to conclude that $$\prod_\ell\Delta_{f_\nu}(F)(a_{\nu \ell})=\prod_\ell\Delta(F,f_\nu)(a_{\nu \ell},t)_0. $$ \end{remark}

\begin{theorem}\label{Theorem 2.7} Suppose $\Delta_{A[[t]]}(F)\not=0$ and $F_0=\prod_\nu f_\nu^{n_\nu}$, where $f_\nu=\prod_l(x-a_{\nu \ell})$ for all $\nu$ with $a_{\nu \ell}\not=a_{\nu'\ell'}$ if $(\nu,\ell)\not=(\nu',\ell')$. Then
\begin{equation}\label{1.7.3}
\Delta_{A[[t]]}(F)_0=\prod_{\nu,\ell}\Delta(F,f_\nu)(a_{\nu \ell},t)_0\prod_\nu\Delta(f_\nu)^{n_\nu}\prod\res(f_\nu,f_{\nu'})^{2(n_\nu+n_{\nu'}-n_\nu n_{\nu'})}.
\end{equation}
\end{theorem}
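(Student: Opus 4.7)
The plan is to factor $F$ over an extension of the Puiseux series ring as $F=\prod_{\nu,\ell,i}(x-s_{\nu\ell i}(t))$, where $s_{\nu\ell 1},\ldots,s_{\nu\ell n_\nu}$ are the $t$-roots of $F$ that specialize to $a_{\nu\ell}$ as $t\to 0$. Since $F$ is monic in $x$, the discriminant becomes $\Delta_{A[[t]]}(F)=\prod_{(p,q)}(s_p-s_q)^2$ indexed over unordered pairs of distinct roots. I would partition these pairs into three types: \textbf{(I)} both roots in the same cluster $(\nu,\ell)$; \textbf{(II)} same $\nu$ but distinct clusters $\ell\neq\ell'$; \textbf{(III)} distinct components $\nu\neq\nu'$. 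The leading coefficient in $t$ of a product is the product of the leading coefficients, so I can evaluate $(\cdot)_0$ for each type and multiply.

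For Types II and III, the individual factors have nonzero limits, so no cancellation occurs: $(s_{\nu\ell i}-s_{\nu\ell' i'})_0=a_{\nu\ell}-a_{\nu\ell'}$ and $(s_{\nu\ell i}-s_{\nu'\ell' i'})_0=a_{\nu\ell}-a_{\nu'\ell'}$ respectively. A routine count of multiplicities gives
\begin{equation*}
\text{Type II}_0=\prod_\nu\Delta(f_\nu)^{n_\nu^2},\qquad \text{Type III}_0=\prod_{\nu<\nu'}\res(f_\nu,f_{\nu'})^{2n_\nu n_{\nu'}}.
\end{equation*}

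For Type I the cancellation is essential, and is handled precisely by Proposition~\ref{Proposition 2.5}, which I would rearrange to express $\prod_{i<j}(s_{\nu\ell i}-s_{\nu\ell j})_0^2$ as $\Delta(F,f_\nu)(a_{\nu\ell},t)_0$ divided by the two auxiliary factors $\big[\prod_{\nu'\neq\nu,\ell''}(a_{\nu\ell}-a_{\nu'\ell''})^{n_{\nu'}}\big]^{2n_\nu-2}$ and $\big[\prod_{\ell'\neq\ell}(a_{\nu\ell}-a_{\nu\ell'})\big]^{n_\nu(n_\nu-1)}$. Taking the product over all $(\nu,\ell)$ and invoking $\prod_{\ell,\ell''}(a_{\nu\ell}-a_{\nu'\ell''})=\res(f_\nu,f_{\nu'})$ together with $\prod_{\ell\neq\ell'}(a_{\nu\ell}-a_{\nu\ell'})=\pm\Delta(f_\nu)$ collapses these denominators into $\prod_{\nu\neq\nu'}\res(f_\nu,f_{\nu'})^{2n_{\nu'}(n_\nu-1)}$ and $\prod_\nu\Delta(f_\nu)^{n_\nu(n_\nu-1)}$, with the sign vanishing because $n_\nu(n_\nu-1)$ is even.

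The remaining step is exponent bookkeeping. The total exponent of $\Delta(f_\nu)$ becomes $n_\nu^2-n_\nu(n_\nu-1)=n_\nu$, matching the middle factor of the claim. For the resultant, reorganizing ordered pairs $(\nu,\nu')$ into unordered pairs (using $\res(f_\nu,f_{\nu'})=(-1)^{d_\nu d_{\nu'}}\res(f_{\nu'},f_\nu)$, whose sign contributes only even powers) yields exponent $2n_\nu n_{\nu'}-2n_{\nu'}(n_\nu-1)-2n_\nu(n_{\nu'}-1)=2(n_\nu+n_{\nu'}-n_\nu n_{\nu'})$, exactly as required. The main obstacle is the careful accounting of exponents and signs across the three pair-types, but everything works out cleanly because all sign contributions end up in even powers.
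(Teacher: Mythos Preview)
Your proposal is correct and follows essentially the same approach as the paper's proof: both factor $F$ via its $t$-roots, split the discriminant product into the three pair-types (same cluster, same component but different cluster, different components), evaluate Types~II and~III directly at $t=0$, and handle Type~I via the identity proved as Equation~\eqref{(1.5.2)} in Proposition~\ref{Proposition 2.5}, after which the exponent bookkeeping for $\Delta(f_\nu)$ and $\res(f_\nu,f_{\nu'})$ is identical. The paper's write-up is terser but the logical structure is the same.
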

\begin{proof} Consider the $t$-roots of $F$
\begin{equation}\label{1.7.4}
x=s_{\nu \ell i_\nu}=a_{\nu \ell} + \cdots,
\end{equation}
where $ 1\leq i_\nu \leq n_\nu$. The roots $s_{\nu li_\nu}$ are distincts by hypothesis and
\begin{align*}
\Delta_{A[[t]]}(F)_0 =  &\Big[\prod_{\nu,\ell}\Big(\prod(s_{\nu \ell i_\nu}-s_{\nu \ell j_\nu})^2\Big)_0\Big]\Big[\prod_\nu\Big(\prod(a_{\nu \ell'}-a_{\nu \ell})^{2n_\nu n_{\nu}}\Big)\Big]\prod(a_{\nu'\ell'}-a_{\nu \ell})^{2n_\nu n_{\nu'}}
           \\
           = & \Big[\prod_{\nu,\ell}\Big(\frac{\Delta(F,f_\nu)(a_{\nu \ell},t)_0}{\prod_{\ell' \not = \ell}(a_{\nu \ell}-a_{\nu \ell'})^{n_\nu(n_{\nu}-1)}\prod_{\nu'\not = \nu}(a_{\nu \ell}-a_{\nu'\ell'})^{(2n_\nu-2)n_{\nu'}}}\Big)\Big] 
           \\ 
            & \hspace{1cm} \Big[\prod_\nu\Big(\prod(a_{\nu \ell'}-a_{\nu \ell})^{2n_\nu n_{\nu}}\Big)\Big]\prod(a_{\nu'\ell'}-a_{\nu \ell})^{2n_\nu n_{\nu'}} \hspace{0.2cm} \textrm{ (by Equation~}\eqref{(1.5.2)})
           \\
          = & \prod_{\nu,\ell}\Delta(F,f_\nu)(a_{\nu \ell},t)_0\Big[\prod_\nu\Big(\prod(a_{\nu \ell}-a_{\nu \ell'})^{2n_\nu}\Big)\Big]\prod(a_{\nu'\ell'}-a_{\nu \ell})^{2(n_\nu+n_{\nu'}-n_\nu n_{\nu'})},
\end{align*}
as desired.\end{proof}

\begin{remark}{\label{Remark 2.8}} If $F(t)$ is a regular family then the second term of all the $t$-roots of $F(t)$ (obtained in the first truncation) are pairwise distinct. By another hand, the only hypothesis in Theorem~\ref{Theorem 2.7} is that the $t$-roots are pairwise distinct.
\end{remark}

\section{The vertices of a family of polynomials}

Let $F(x,t)=\sum_{i \geq 0} F_i(x)t^i$ be a family of polynomials of degree $n'$ with coefficients in $A$ and let $F_0=\prod_\nu f_\nu(x)^{n_\nu}$, where $f_\nu$ are coprime and monic polynomials as before.

\begin{definition}\label{Definition 3.1} Suppose $f_\nu=\prod_l(x-a_{\nu \ell})$ for all $\nu$. The \textit{vertex} of $F(t)$, denoted by $\ver(F)$, is defined by
$$
\ver(F):=\prod_{\nu,\ell}\Delta(F,f_\nu)(a_{\nu \ell},t)_0\prod\res(f_\nu,f_{\nu'})^{2(n_\nu+n_{\nu'}-n_\nu n_{\nu'})}.
$$
\end{definition}

\begin{proposition}\label{Proposition 3.2} Consider $F(x,t)$ the family of polynomials of the ``first kind", that is,
$$
F(x,t)=x^2A_{n-2}(x) + B_n(x)t+ O(t^2),
$$
where the subscripts denote the degree of an expression, $A_{n-2}(0)B_n(0) \not = 0$ and $A_{n-2}(x)$ is square-free. Then
$$
\ver(F)=-4B_n(0)A_{n-2}^3(0).
$$
\end{proposition}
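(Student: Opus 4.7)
The plan is to apply Definition~\ref{Definition 3.1} to the factorization $F_0 = x^2 A_{n-2}(x)$. Under the standing monic convention of Section~3, $A_{n-2}$ is monic in $x$, and $F_0$ decomposes into $f_1 := x$ with multiplicity $n_1 = 2$ (single root $0$) and $f_2 := A_{n-2}$ with multiplicity $n_2 = 1$ and $n-2$ distinct roots $a_{2,\ell}$, all nonzero because $A_{n-2}(0)\neq 0$. Two of the three factors in Definition~\ref{Definition 3.1} are immediate: for $\nu = 2$, Definition~\ref{Definition 2.3} gives $\Delta(F, f_2)(a_{2,\ell}, t) = 1$, since $2 n_2 - 2 = 0$ and the product over $i<j$ is empty; and the single crossed-resultant contributes $\res(f_1, f_2)^{2(n_1+n_2-n_1 n_2)} = \res(x, A_{n-2})^2 = A_{n-2}(0)^2$. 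The work thus reduces to computing $\Delta(F, x)(0, t)_0$.

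Because $f_1 = x$ is already linear, Lemma~\ref{Lemma 2.4} applies directly to the refined factorization $F_0 = x^2 \prod_\ell (x - a_{2,\ell})$ (all roots distinct, by square-freeness of $A_{n-2}$ together with $A_{n-2}(0)\neq 0$). Taking the distinguished root $a_\nu = 0$ with $n_\nu = 2$, it gives
$$\Delta(F, x)(0, t)_0 = \Bigl[\textstyle\prod_\ell (0 - a_{2,\ell})\Bigr]^{2 n_1 - 2}(s_1 - s_2)_0^2 = A_{n-2}(0)^2 \,(s_1 - s_2)_0^2,$$
where $s_1(t), s_2(t)$ denote the two $t$-roots of $F$ specializing to $0$ at $t = 0$.

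The only genuine computation is to evaluate $(s_1 - s_2)_0^2$ via the Newton-polygon/first-truncation procedure of Section~2. The hypotheses $A_{n-2}(0), B_n(0) \neq 0$ place the relevant corner points of the Newton polygon of $F$ at $(0, 2)$ and $(1, 0)$, producing a single leading edge of slope $-1/m_1 = -2$. The associated truncation polynomial $P_{F_1'}(s) = A_{n-2}(0)\, s^2 + B_n(0)$ has two distinct roots $\pm\sqrt{-B_n(0)/A_{n-2}(0)}$, so the two branches begin as $s_{1,2} = \pm\sqrt{-B_n(0)/A_{n-2}(0)}\, t^{1/2} + O(t)$, which yields $(s_1 - s_2)^2 = -\tfrac{4 B_n(0)}{A_{n-2}(0)}\, t + O(t^{3/2})$ and hence $(s_1 - s_2)_0^2 = -4 B_n(0)/A_{n-2}(0)$. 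Combining everything,
$$\ver(F) = \Bigl(A_{n-2}(0)^2 \cdot \tfrac{-4 B_n(0)}{A_{n-2}(0)}\Bigr) \cdot A_{n-2}(0)^2 = -4\, B_n(0)\, A_{n-2}^3(0),$$
as claimed. I expect no serious obstacle beyond the Puiseux step: because $A_{n-2}(0), B_n(0)\neq 0$, the Newton polygon has a single edge with two simple leading roots, so no iterated truncations or delicate cancellations arise---which is precisely what makes the \emph{first kind} family the simplest case.
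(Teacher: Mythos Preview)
Your proof is correct and follows essentially the same route as the paper's: both compute $\Delta(F,x)(0,t)_0$ via Lemma~\ref{Lemma 2.4} and the first truncation $P_{F_1'}(s)=A_{n-2}(0)s^2+B_n(0)$, then multiply by the crossed-resultant factor $\res(x,A_{n-2})^2=A_{n-2}(0)^2$ from Definition~\ref{Definition 3.1}. Your version is slightly more explicit in spelling out why the $\nu=2$ contribution is trivial (the ``$\cdot 1\cdot$'' in the paper's display) and in describing the Newton polygon, but the argument is the same.
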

\begin{proof} Indeed, first note that $F_1'=x^2A_{n-2}(0)+B_n(0)t$. Let $\alpha_1,\alpha_2$ be the two formal roots of $P_{F_1'}(s)=s^2A_{n-2}(0)+B_n(0)$. So
$$(\alpha_1 - \alpha_2)^2=\frac{-4B_n(0)}{A_{n-2}(0)}.$$
By Lemma~\ref{Lemma 2.4} we have 
$$\Delta(F,x)(0,t)_0=(A_{n-2}(0))^{2\cdot 2 - 2} \cdot\frac{-4B_n(0)}{A_{n-2}(0)} =-4A_{n-2}(0)B_n(0).$$
Now, by Definition~\ref{Definition 3.1}
$$\begin{aligned} \ver(F) 	& =  \Delta(F,x)(0,t)_0 \cdot 1 \cdot \res(A_{n-2}(x),x)^{2(2+1-2)}\\ & = -4A_{n-2}(0)B_n(0)A_{n-2}^2(0) \\ 
						& = -4B_n(0)A_{n-2}^3(0),\end{aligned} $$
as claimed.\end{proof}

\begin{proposition}\label{Proposition 3.3} Consider $F(x,t)$ the family of polynomials of the ``second kind", that is,
$$
F(x,t)=x^2A_{n-2}(x) + 2xB_{n-1}(x)t + C_n(x)t^2 + O(t^3),
$$
where $A_{n-2}(0)\big(B_{n-1}^2(0)-A_{n-2}(0)C_n(0)\big) \not = 0$ and $A_{n-2}(x)$ is square-free. Then
$$
\ver(F)=4\Big(B_{n-1}^2(0)-A_{n-2}(0)C_n(0)\Big)A_{n-2}^2(0).
$$
\end{proposition}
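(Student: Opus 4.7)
The plan is to follow the same architecture as the proof of Proposition~\ref{Proposition 3.2}: factor $F_0$, compute the leading coefficient $(\alpha_1-\alpha_2)^2$ of the relevant Puiseux-root difference, feed this into Lemma~\ref{Lemma 2.4} to obtain the generalized formal discriminant at the multiple root, and then assemble $\ver(F)$ via Definition~\ref{Definition 3.1}. The essential difference from the first-kind case is that the leading edge of the Newton polygon now carries a third lattice point, so $P_{F_1'}(s)$ is a genuine quadratic rather than a binomial whose roots are immediate to read off.

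Concretely, I would decompose $F_0 = x^{2} \cdot A_{n-2}(x)$ as $f_1 = x$ with $n_1 = 2$ and $f_2 = A_{n-2}$ with $n_2 = 1$. Because $n_2 = 1$, each factor $\Delta(F, A_{n-2})(a_{2,\ell}, t)_0$ equals $1$ (the discriminant of a degree-one polynomial in $w$, in the convention of Remark~\ref{Remaark 2.2}), so Definition~\ref{Definition 3.1} collapses to
\[
\ver(F) \;=\; \Delta(F, x)(0, t)_0 \cdot \res(x, A_{n-2})^{2(n_1 + n_2 - n_1 n_2)} \;=\; \Delta(F, x)(0, t)_0 \cdot A_{n-2}(0)^{2}.
\]
The main task is therefore to compute $\Delta(F, x)(0, t)_0$. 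Reading off the Newton polygon at $x = 0$, the hypothesis $A_{n-2}(0) \neq 0$ pins the corner $(0,2)$ and the hypothesis $B_{n-1}^2(0) - A_{n-2}(0)C_n(0) \neq 0$ guarantees that the leading edge of slope $-1$ has first truncation
\[
F_1' \;=\; A_{n-2}(0)\,x^2 + 2B_{n-1}(0)\,xt + C_n(0)\,t^2.
\]
The two roots $\alpha_1,\alpha_2$ of $P_{F_1'}(s) = A_{n-2}(0)\,s^2 + 2B_{n-1}(0)\,s + C_n(0)$ then satisfy
\[
(\alpha_1-\alpha_2)^2 \;=\; \frac{4\bigl(B_{n-1}^2(0) - A_{n-2}(0)C_n(0)\bigr)}{A_{n-2}^2(0)},
\]
so Lemma~\ref{Lemma 2.4} yields $\Delta(F,x)(0,t)_0 = A_{n-2}(0)^{2}(\alpha_1-\alpha_2)^2 = 4\bigl(B_{n-1}^2(0) - A_{n-2}(0)C_n(0)\bigr)$. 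Assembling with the resultant factor delivers the stated identity.

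The one genuinely delicate point I anticipate is the degenerate case $C_n(0) = 0$: there the lattice point $(2,0)$ drops out of the polygon, the branch $\alpha_1 = 0$ forces the Puiseux expansion of $s_1$ to be continued to higher order in $t$, and one must verify that the formula still holds. Because the other branch $s_2 = -\tfrac{2B_{n-1}(0)}{A_{n-2}(0)}\,t + \cdots$ is non-degenerate, the difference $s_1 - s_2$ still has leading $t$-order equal to $1$, so Lemma~\ref{Lemma 2.4} continues to apply and returns $4 B_{n-1}^2(0)$, in agreement with the quadratic-formula expression evaluated at $C_n(0) = 0$. Aside from this bookkeeping, the argument is a clean analogue of the proof of Proposition~\ref{Proposition 3.2}.
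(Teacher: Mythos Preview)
Your proposal is correct and follows essentially the same approach as the paper: compute the first truncation $F_1'$, read off $(\alpha_1-\alpha_2)^2$ from the quadratic $P_{F_1'}(s)$, apply Lemma~\ref{Lemma 2.4} to get $\Delta(F,x)(0,t)_0$, and then multiply by $\res(x,A_{n-2})^{2(2+1-2)} = A_{n-2}(0)^2$ via Definition~\ref{Definition 3.1}. Your explicit remark that the factors $\Delta(F,A_{n-2})(a_{2,\ell},t)_0$ are trivially $1$ and your treatment of the borderline case $C_n(0)=0$ are additions the paper does not spell out, but they do not change the argument.
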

\begin{proof} Note that $F_1'=x^2A_{n-2}(0)+2xB_{n-1}(0)t+C_n(0)t^2$. Let $\alpha_1,\alpha_2$ be the two formal roots of $P_{F_1'}(s)=s^2A_{n-2}(0)+2sB_{n-1}(0)+C_n(0)$. So
$$(\alpha_1 - \alpha_2)^2=\frac{4B_{n-1}^2(0)-4A_{n-2}(0)C_n(0)}{A_{n-2}^2(0)}.$$
By Lemma~\ref{Lemma 2.4} we have
$$\begin{aligned} \Delta(F,x)(0,t)_0&=(A_{n-2}(0))^{2\cdot 2 - 2} \cdot\frac{4B_{n-1}^2(0)-4A_{n-2}(0)C_n(0)}{A_{n-2}^2(0)}\\ & =4\Big(B_{n-1}^2(0)-A_{n-2}(0)C_n(0)\Big).
\end{aligned}$$
Now, by Definition~\ref{Definition 3.1} 
$$\begin{aligned} \ver(F) & =  \Delta(F,x)(0,t)_0 \cdot 1 \cdot \res(A_{n-2}(x),x)^{2(2+1-2)}\\ &= 4\Big(B_{n-1}^2(0)-A_{n-2}(0)C_n(0)\Big)A_{n-2}^2(0), \end{aligned}$$
as claimed.\end{proof}

\begin{proposition}\label{Proposition 3.4} Consider $F(x,t)$ the family of polynomials of the ``third kind", that is,
$$
F(x,t)=x^2a_0+x^3a_1+2xb_0t+x^2b_1t+c_0t^2+xc_1t^2+d_1t^3+O(t^4),
$$
where $b_0^2-a_0c_0=0$ and $a_{0}(-a_1b_0^3 + b_1b_0^2a_0 - c_1b_0a_0^2 + d_1a_0^3) \not = 0$. Then
$$
\ver(F)=-4(-a_1b_0^3 + b_1b_0^2a_0 - c_1b_0a_0^2 + d_1a_0^3).
$$
\end{proposition}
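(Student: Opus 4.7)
The proof should follow the template of Propositions~\ref{Proposition 3.2} and~\ref{Proposition 3.3}, but the degeneracy $b_0^2 - a_0 c_0 = 0$ forces one additional step in the Puiseux expansion. First, one forms the first truncation $F_1' = a_0 x^2 + 2 b_0 t x + c_0 t^2$ and observes that $P_{F_1'}(s) = a_0 s^2 + 2 b_0 s + c_0$ has, by hypothesis, a double root $\lambda := -b_0/a_0$; in particular, both branches of $F$ through $x=0$ start as $s_{1,2} = \lambda t + \cdots$, so $(s_1 - s_2)^2$ is not yet accessible at this level and a second truncation is needed.

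Next, substitute $x = \lambda t + z$ in $F$ and analyse the Newton polygon of $F(\lambda t + z, t)$ in $(t, z)$. The identities $a_0 \lambda^2 + 2 b_0 \lambda + c_0 = 0$ and $a_0 \lambda + b_0 = 0$ produce simultaneous cancellations: the constant-in-$z$ term vanishes through order $t^2$ and equals $D/a_0^3$ at order $t^3$, where $D := -a_1 b_0^3 + b_1 b_0^2 a_0 - c_1 b_0 a_0^2 + d_1 a_0^3$; the coefficient of $z$ vanishes at order $t$ and equals $3 a_1 \lambda^2 + 2 b_1 \lambda + c_1$ at order $t^2$; and the coefficient of $z^2$ is $a_0 + O(t)$. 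The surviving lattice points near the origin are therefore $(0,2)$, $(2,1)$, $(3,0)$. The decisive geometric observation is that $(2,1)$ lies strictly \emph{above} the segment from $(0,2)$ to $(3,0)$ (since $1 > 2/3$), so the new leading edge is the single segment of slope $-2/3$, giving $m_2 = 3/2$ and the associated second truncation polynomial $P(u) = a_0 u^2 + D/a_0^3$, whose roots are $\pm \sqrt{-D/a_0^4}$.

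From these roots one reads off $s_{1,2} = \lambda t \pm \sqrt{-D/a_0^4}\, t^{3/2} + \cdots$, so that $(s_1 - s_2)^2 = -4 D/a_0^4 \cdot t^3 + \cdots$ and $\bigl((s_1 - s_2)^2\bigr)_0 = -4 D/a_0^4$. Writing $F_0 = x^2 A(x)$ with $A(x) := a_0 + a_1 x$ and $A(0) = a_0$, Lemma~\ref{Lemma 2.4} applied exactly as in the proofs of Propositions~\ref{Proposition 3.2} and~\ref{Proposition 3.3} gives
\[\Delta(F,x)(0,t)_0 = A(0)^{2 \cdot 2 - 2} \cdot \bigl((s_1 - s_2)^2\bigr)_0 = a_0^2 \cdot (-4 D / a_0^4) = -4 D/a_0^2.\]
Finally, Definition~\ref{Definition 3.1}, together with $\res(x, A) = A(0) = a_0$ (Remark~\ref{Remaark 2.2}), yields $\ver(F) = \Delta(F, x)(0, t)_0 \cdot \res(x, A)^{2(2 + 1 - 2)} = -4 D$, as claimed. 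The main obstacle is the careful bookkeeping of the substitution $x = \lambda t + z$: one must verify that the relevant surviving lattice points really are $(0,2)$, $(2,1)$, $(3,0)$ with the coefficients above, and in particular that $(2,1)$ sits strictly above the line from $(0,2)$ to $(3,0)$ so that it does not spoil the leading edge. Once this convexity picture is secured, the remaining algebra is routine.
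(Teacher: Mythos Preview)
Your proof is correct and follows essentially the same route as the paper's. The only cosmetic difference is the form of the second substitution: the paper sets $x=t\bigl(x_1-\tfrac{b_0}{a_0}\bigr)$, which peels off a factor $t^2$ and yields $G(x_1,t)=a_0x_1^2+\tfrac{D}{a_0^3}\,t+O(t^2)$ with a leading edge of slope $-2$, whereas you set $x=\lambda t+z$ and read off the edge of slope $-2/3$ directly from the Newton polygon of $F(\lambda t+z,t)$; after unwinding ($z=tx_1$), both give the same Puiseux expansion $s_{1,2}=\lambda t+\alpha_it^{3/2}+\cdots$ and the identical values $(s_1-s_2)_0^2=-4D/a_0^4$, $\Delta(F,x)(0,t)_0=-4D/a_0^2$, $\ver(F)=-4D$.
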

\begin{proof} Note that $F_1'=x^2a_0+2xb_0t+c_0t^2$. Since $b_0^2-a_0c_0=0$, let
$\alpha=-b_0/a_0$ be the only root of $P_{F_1'}(s)=s^2a_0+2sb_0+c_0$. In this
case, we need compute more terms of the $t$-roots of $F$. Consider the following
substitution on $F$, 
$$x=t(x_1-b_0/a_0),$$
where $x_1$ is a new variable. So $F(x,t)=t^2G(x_1,t)$, where 
$$G(x_1,t)=x_1^2a_0 + \Big[a_1\Big(\frac{-b_0}{a_0}\Big)^3 + b_1\Big(\frac{-b_0}{a_0}\Big)^2+c_1\Big(\frac{-b_0}{a_0}\Big)+d_1\Big]t +O(t^2).$$
The first truncation of $G$ is 
$$G_1'=x_1^2a_0 + \Big[a_1\Big(\frac{-b_0}{a_0}\Big)^3 + b_1\Big(\frac{-b_0}{a_0}\Big)^2+c_1\Big(\frac{-b_0}{a_0}\Big)+d_1\Big]t.$$
Since $a_{0}(-a_1b_0^3 + b_1b_0^2a_0 - c_1b_0a_0^2 + d_1a_0^3) \not = 0$, let $\alpha_1,\alpha_2$ be the two roots of 
$$P_{G_1'}(s)=s^2a_0 + \Big[a_1\Big(\frac{-b_0}{a_0}\Big)^3 + b_1\Big(\frac{-b_0}{a_0}\Big)^2+c_1\Big(\frac{-b_0}{a_0}\Big)+d_1\Big].$$
So $$ (\alpha_1-\alpha_2)^2= \frac{-4a_0\cdot \Big[a_1\Big(\frac{-b_0}{a_0}\Big)^3 + b_1\Big(\frac{-b_0}{a_0}\Big)^2+c_1\Big(\frac{-b_0}{a_0}\Big)+d_1\Big]}{a_0^2}.$$
By Lemma~\ref{Lemma 2.4} we have that 
$$\begin{aligned}
	\Delta(F,x)(0,t)_0 & =(0\cdot a_1+a_0)^{2\cdot 2 -2}\cdot 1 \cdot \frac{-4a_0\cdot \Big[a_1\Big(\frac{-b_0}{a_0}\Big)^3 + b_1\Big(\frac{-b_0}{a_0}\Big)^2+c_1\Big(\frac{-b_0}{a_0}\Big)+d_1\Big]}{a_0^2} \\ & = -4a_0\Big[a_1\Big(\frac{-b_0}{a_0}\Big)^3 + b_1\Big(\frac{-b_0}{a_0}\Big)^2+c_1\Big(\frac{-b_0}{a_0}\Big)+d_1 \Big].\end{aligned}$$
Now, by Definition~\ref{Definition 3.1}
$$\begin{aligned} \ver(F) & =  \Delta(F,x)(0,t)_0 \cdot 1 \cdot \res(a_1x+a_0,x)^{2(2+1-2)} \\
					  & = -4a_0\Big[a_1\Big(\frac{-b_0}{a_0}\Big)^3 + b_1\Big(\frac{-b_0}{a_0}\Big)^2+c_1\Big(\frac{-b_0}{a_0}\Big)+d_1 \Big]\cdot a_0^2 \\ 
					  & = -4(-a_1b_0^3 + b_1b_0^2a_0 - c_1b_0a_0^2 + d_1a_0^3),\end{aligned}$$
as claimed.\end{proof}

\begin{definition}\label{Definition 3.5} 
Let $n\in \mathbb{Z}_{\geq 1}$ and  $\alpha, \delta_n \in A[x]$. We say that $F(x,t)$ satisfies $(\alpha,\delta_n)$-Zd (Zeuthen decomposition) for $f_\nu$ if $n_\nu >1$, $\gcd(\alpha \delta_n,f_\nu)=1$ and $$\alpha F(t)=(f_\nu\beta_0+\beta_1t+\cdots +\beta_{n-1}t^{n-1})^{n_\nu} + \delta_n t^n + O(t^{n+1}) $$ with $\beta_i \in A[x]$ for all $i=0,...,n-1$. \end{definition}

\begin{lemma}\label{Lemma 3.6} Suppose that $F(x,t)$ satisfies $(\alpha,\delta_n)$-Zd for $f_\nu=x$. Then
$$\Delta(F,x)(0,t)_0=\frac{(-1)^\frac{n_\nu(n_\nu-1)}{2}n_\nu^{n_\nu}\cdot \delta_n(0)^{n_\nu-1}\cdot\Big(\prod_{\nu' \not = \nu}f_{\nu'}^{n_{\nu'}}(0)\Big)^{n_\nu-1}}{\alpha(0)^{n_\nu-1}}.$$
\end{lemma}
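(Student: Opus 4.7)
The strategy is to apply Lemma~\ref{Lemma 2.4} to reduce the claim to computing the leading Puiseux term of $\prod_{i<j}(s_i-s_j)^2$, where $s_1,\dots,s_{n_\nu}$ are the $t$-roots of $F$ with $s_i(0)=0$; this, in turn, is extracted directly from the Zeuthen decomposition. Treating each root of $F_0$ other than $0$ as a separate simple root in the notation of Lemma~\ref{Lemma 2.4} and regrouping by the $f_{\nu'}$ they belong to, I get
\[
\Delta(F,x)(0,t)_0 = \Bigl(\prod_{\nu'\neq\nu} f_{\nu'}(0)^{n_{\nu'}}\Bigr)^{2n_\nu-2}\prod_{i<j}(s_i-s_j)_0^2,
\]
so it suffices to identify the right-hand product.

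Set $G(x,t):=x\beta_0(x)+\beta_1(x)t+\cdots+\beta_{n-1}(x)t^{n-1}$. Evaluating $\alpha F=G^{n_\nu}+\delta_n t^n+O(t^{n+1})$ at $t=0$ gives $\alpha(x)F_0(x)=x^{n_\nu}\beta_0(x)^{n_\nu}$, hence
\[
\beta_0(0)^{n_\nu}=\alpha(0)\prod_{\nu'\neq\nu}f_{\nu'}(0)^{n_{\nu'}}\neq 0.
\]
Since $\alpha(s_i)\neq 0$ near $t=0$, the relation $F(s_i,t)=0$ translates into $G(s_i,t)^{n_\nu}=-\delta_n(s_i)\,t^n+O(t^{n+1})$, and because $\delta_n(0)\neq 0$, extracting an $n_\nu$-th root yields $G(s_i,t)=\zeta_i\,t^{n/n_\nu}+\text{higher order}$, where $\zeta_1,\dots,\zeta_{n_\nu}$ are the distinct $n_\nu$-th roots of $-\delta_n(0)$. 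On the other hand, since $G_x(0,0)=\beta_0(0)\neq 0$, there is a (formal) $x_0(t)$ with $x_0(0)=0$ and $G(x_0(t),t)=0$. Taylor-expanding $G$ around $x_0(t)$ gives $G(x,t)=(x-x_0(t))[\beta_0(0)+O(t)]+O((x-x_0)^2)$, so that
\[
s_i = x_0(t)+\frac{\zeta_i}{\beta_0(0)}\,t^{n/n_\nu}+\text{higher order},
\]
and thus $s_i-s_j$ has leading Puiseux term $(\zeta_i-\zeta_j)t^{n/n_\nu}/\beta_0(0)$.

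Finally, $\prod_{i<j}(\zeta_i-\zeta_j)^2$ is the discriminant of $z^{n_\nu}+\delta_n(0)$, which a direct resultant computation using Remark~\ref{Remaark 2.2} evaluates to $(-1)^{n_\nu(n_\nu-1)/2}n_\nu^{n_\nu}\delta_n(0)^{n_\nu-1}$. Combining,
\[
\prod_{i<j}(s_i-s_j)_0^2 = \frac{(-1)^{n_\nu(n_\nu-1)/2}\,n_\nu^{n_\nu}\,\delta_n(0)^{n_\nu-1}}{\beta_0(0)^{n_\nu(n_\nu-1)}},
\]
and substituting $\beta_0(0)^{n_\nu(n_\nu-1)}=[\alpha(0)\prod_{\nu'\neq\nu}f_{\nu'}(0)^{n_{\nu'}}]^{n_\nu-1}$ into the reduction above yields precisely the claimed formula. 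The main obstacle is justifying the leading-term computation carefully in the Puiseux regime $n/n_\nu\notin\mathbb{Z}$: individual $s_i$ lie in a ramified extension of $A[[t]]$, so one must verify that the error terms from the Taylor expansion of $G$ around $x_0(t)$—of orders $t\cdot t^{n/n_\nu}$ and $t^{2n/n_\nu}$—have strictly higher Puiseux valuation than $t^{n/n_\nu}$, ensuring that the leading coefficient of the Galois-invariant product $\prod(s_i-s_j)^2\in A[[t]]$ is correctly computed as the product of the leading Puiseux coefficients.
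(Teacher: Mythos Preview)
Your proof is correct but follows a genuinely different route from the paper's. The paper works with $G:=\alpha F$ and runs the Newton--Puiseux algorithm iteratively: as long as the leading edge of the Newton polygon of $G$ lies strictly above the lattice point $(n,0)$, the associated truncation polynomial is a pure power $(s\beta_0(0)+\beta_{i_k}(0))^{n_\nu}$, so each step finds a single root $\phi_k$, substitutes $x=t^{i_k}(x_k+\phi_k)$, and repeats; only once the $\delta_n t^n$ term enters the leading edge does the truncation polynomial become $(s\beta_0(0))^{n_\nu}+\delta_n(0)$ (or its shifted variant) with $n_\nu$ distinct roots, and the discriminant is read off at that stage. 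You instead perform a single formal change of variable $w=x\beta_0+\beta_1 t+\cdots+\beta_{n-1}t^{n-1}$, using $\beta_0(0)\neq 0$ to invert it near the origin via the implicit function theorem; this collapses all the intermediate Newton--Puiseux substitutions at once and transforms the equation directly into $w^{n_\nu}+\delta_n(0)t^n+(\text{higher})=0$, from which the leading terms of the $s_i-s_j$ and hence the discriminant are immediate. Your argument is shorter and more conceptual; the paper's stays inside the explicit Puiseux framework developed in Section~2 and is more algorithmic (and feeds directly into the later computation of the type in Proposition~\ref{Proposition 4.6}). One point in your write-up deserves to be made explicit: the assertion that the $\zeta_i$ are \emph{distinct} is precisely the statement that the first truncation polynomial of the transformed $w$-equation, namely $z^{n_\nu}+\delta_n(0)$, has simple roots---which holds because $\delta_n(0)\neq 0$. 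Equivalently, the local isomorphism $x\leftrightarrow w$ carries the $n_\nu$ branches $s_i$ bijectively to the $n_\nu$ Puiseux branches $w_i$ of the $w$-equation, and a single Newton--Puiseux step on that equation separates them.
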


\begin{proof}  First, note that $\alpha(0)\delta_n(0)\not = 0$. Now, let $\beta_i \in A[x]$ with $i=0,...,n-1$ such that 
$$G:=\alpha F=(x\beta_0+\beta_1t+\cdots + \beta_{n-1}t^{n-1})^{n_\nu}+\delta_nt^n + O(t^{n+1}).$$
Consider $i_1:=\min\{ i \ | \ i>0 \textrm{ and } \beta_i(0) \not = 0  \} $. If $i_1\cdot n_\nu < n$, then the first truncaction of $G$ is
$$G_1'=(x\beta_0(0)+\beta_{i_1}(0)t^{i_1})^{n_\nu}. $$
In this case, $P_{G_1'}(s)=(s\beta_0(0)+\beta_{i_1}(0))^{n_\nu}$. Substituting $x=t^{i_1}(x_1+\phi_1)$ on $G$, where $\phi_1$ is the only root of $P_{G_1'}(s)$ and $x_1$ is a new variable. After a finite number of steps, using recurrence, we may assume that $G=t^mH$ where
$$H=\Big(x_j\big(\beta_0(0)+x_j\beta_0'\big)+\beta_1't+\cdots + \beta_{n'-1}'t^{n'-1}\Big)^{n_\nu}+\big(\delta_n(0)+x_j\delta_{n'}'\big)t^{n'} + O(t^{n'+1}) $$
with $x_j$ a variable and $i_j:=\min\{ i \ | \ i>0 \textrm{ and } \beta_i'(0) \not = 0 \} $  satisfies $i_j\cdot n_\nu\geq n'$. 

\underline{If $i_j\cdot n_\nu=n'$}. In this case, the first truncation of $H$ is
$$ H_1'=\big(x_j\beta_0(0)+\beta_{i_j}'(0)t^{i_j}\big)^{n_\nu}+\delta_n(0)t^{n'} $$
and $P_{H_1'}(s)=\big(s\beta_0(0)+\beta_{i_j}'(0)\big)^{n_\nu}+\delta_n(0)$. If $\alpha_1,...,\alpha_n$ are the roots of $P_{H_1'}(s)$ then 
\begin{equation}\label{(2.6.6)}
\prod_{i<j}(\alpha_i-\alpha_j)^2 = \frac{(-1)^{\frac{n_\nu(n_\nu-1)}{2}}n_\nu^{n_\nu}\cdot\delta_n(0)^{n_\nu-1}\cdot\beta_0(0)^{n_\nu(n_\nu-1)}}{(\beta_0(0)^{n_\nu})^{2n_\nu-2}}.
\end{equation}

\underline{If $i_j\cdot n_\nu>n'$}. In this case, the first truncation of $H$ is
$$ H_1'=\big(x_j\beta_0(0)\big)^{n_\nu}+\delta_n(0)t^{n'} $$
and $P_{H_1'}(s)=\big(s\beta_0(0)\big)^{n_\nu}+\delta_n(0)$. If $\alpha_1',...,\alpha_n'$ are the roots of $P_{H_1'}(s)$ then

\begin{equation}\label{(2.6.7)}
\prod_{i<j}(\alpha_i'-\alpha_j')^2 = \frac{(-1)^{\frac{n_\nu(n_\nu-1)}{2}}n_\nu^{n_\nu}\cdot\delta_n(0)^{n_\nu-1}\cdot\beta_0(0)^{n_\nu(n_\nu-1)}}{(\beta_0(0)^{n_\nu})^{2n_\nu-2}}.
\end{equation}
By  Definition~\ref{Definition 3.1}, Lemma~\ref{Lemma 2.4}, Equation~\eqref{(2.6.6)} and Equation~\eqref{(2.6.7)} we have that 
$$\alpha(0)^{2n_\nu-2}\cdot\Delta(F,x)(0,t)_0  = \Delta(G,x)(0,t)_0=(-1)^{\frac{n_\nu(n_\nu-1)}{2}}n_\nu^{n_\nu}\cdot\delta_n(0)^{n_\nu-1}\cdot\beta_0(0)^{n_\nu(n_\nu-1)},$$
that is, 
$$\Delta(F,x)(0,t)_0  =\frac{(-1)^\frac{n_\nu(n_\nu-1)}{2}n_\nu^{n_\nu}\cdot \delta_n(0)^{n_\nu-1}\cdot\beta_0(0)^{n_k(n_\nu-1)}}{\alpha(0)^{2n_\nu-2}}.$$
Since
$$\alpha \cdot \prod_{\nu' \not = \nu}f_{\nu'}^{n_{\nu'}}=\beta_0^{n_{\nu}}$$
then 
$$
\Delta(F,x)(0,t)_0 = \frac{(-1)^\frac{n_\nu(n_\nu-1)}{2}n_\nu^{n_\nu}\cdot \delta_n(0)^{n_\nu-1}\cdot\Big(\prod_{\nu' \not = \nu}f_{\nu'}^{n_{\nu'}}(0)\Big)^{n_\nu-1}}{\alpha(0)^{n_\nu-1}}, $$
as desired. \end{proof}

\begin{theorem}\label{Theorem 3.7} Suppose $F(x,t)$ satisfies $(\alpha_\nu,\delta_{\nu,n(\nu)})$-Zd for each $f_\nu$ with $n_\nu>1$ and $f_\nu=\prod_{\ell=1}^{n_\nu}(x-a_{\nu \ell})$ with all $a_{\nu \ell}$ distincts.  Then  
$$\ver(F)=\frac{c \prod_\nu \res\big(f_\nu,\delta_{\nu,n(\nu)}^{n_\nu-1}\big)\Big[\prod_{\nu}\Big(\prod_{\nu'\not=\nu}\res(f_{\nu},f_{\nu'})^{n_{\nu'}}\Big)\Big]}{\prod_\nu\res(f_\nu,\alpha_\nu^{n_\nu-1})},$$ where $c \in k - \{0\}$. \end{theorem}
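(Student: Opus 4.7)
The plan is to evaluate each factor $\Delta(F,f_\nu)(a_{\nu\ell},t)_0$ appearing in $\ver(F)$ explicitly under the Zeuthen decomposition hypothesis by reducing to Lemma~\ref{Lemma 3.6}, and then to collect everything using the identity $\prod_\ell h(a_{\nu\ell})=\res(f_\nu,h)$, which holds because $f_\nu$ is monic with distinct roots. The overall structure mirrors the proof of Theorem~\ref{Theorem 2.7}, with the Zeuthen decomposition replacing the direct computation of the $t$-roots.

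First, fixing $\nu$ with $n_\nu>1$ and $\ell\in\{1,\dots,n_\nu\}$, I would write $f_\nu=(x-a_{\nu\ell})\,g_{\nu\ell}$ with $g_{\nu\ell}:=\prod_{\ell'\ne\ell}(x-a_{\nu\ell'})$ and shift coordinates via $y=x-a_{\nu\ell}$. The assumed $(\alpha_\nu,\delta_{\nu,n(\nu)})$-Zd for $f_\nu$ transforms into a Zeuthen decomposition for the linear factor $y$ of $\tilde F(y,t):=F(y+a_{\nu\ell},t)$, after absorbing $g_{\nu\ell}(y+a_{\nu\ell})$ into the leading coefficient of the inner expression. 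Applying Lemma~\ref{Lemma 3.6} to $\tilde F$ yields a formula for $\Delta(F,x-a_{\nu\ell})(a_{\nu\ell},t)_0$, and combining with Proposition~\ref{Proposition 2.5} converts it into $\Delta(F,f_\nu)(a_{\nu\ell},t)_0$. The key cancellation is that the factor $g_{\nu\ell}(a_{\nu\ell})^{n_\nu(n_\nu-1)}=\prod_{\ell'\ne\ell}(a_{\nu\ell}-a_{\nu\ell'})^{n_\nu(n_\nu-1)}$ produced by Lemma~\ref{Lemma 3.6} in the numerator cancels exactly the denominator from Proposition~\ref{Proposition 2.5}, leaving
$$
\Delta(F,f_\nu)(a_{\nu\ell},t)_0\;=\;c_{\nu,\ell}\,\frac{\delta_{\nu,n(\nu)}(a_{\nu\ell})^{n_\nu-1}\,\prod_{\nu'\ne\nu}f_{\nu'}(a_{\nu\ell})^{n_{\nu'}(n_\nu-1)}}{\alpha_\nu(a_{\nu\ell})^{n_\nu-1}}
$$
for some $c_{\nu,\ell}\in k\setminus\{0\}$.

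Second, taking the product over $\ell=1,\dots,n_\nu$ and using $\prod_\ell h(a_{\nu\ell})=\res(f_\nu,h)$ (together with $\res(f_\nu,h^k)=\res(f_\nu,h)^k$) turns this into
$$
\prod_\ell\Delta(F,f_\nu)(a_{\nu\ell},t)_0\;=\;c_\nu\,\frac{\res(f_\nu,\delta_{\nu,n(\nu)}^{n_\nu-1})\prod_{\nu'\ne\nu}\res(f_\nu,f_{\nu'})^{n_{\nu'}(n_\nu-1)}}{\res(f_\nu,\alpha_\nu^{n_\nu-1})},
$$
with $c_\nu\in k\setminus\{0\}$. Substituting into the definition of $\ver(F)$ and tracking the exponent on each unordered resultant $\res(f_\nu,f_{\nu'})$ (using $\res(f_{\nu'},f_\nu)=\pm\res(f_\nu,f_{\nu'})$ to merge the two orderings), the total exponent is
$$
n_{\nu'}(n_\nu-1)+n_\nu(n_{\nu'}-1)+2(n_\nu+n_{\nu'}-n_\nu n_{\nu'})\;=\;n_\nu+n_{\nu'},
$$
which matches the exponent produced by the symmetric product $\prod_\nu\prod_{\nu'\ne\nu}\res(f_\nu,f_{\nu'})^{n_{\nu'}}$ in the statement. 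The constant $c\in k\setminus\{0\}$ absorbs the product of the $c_\nu$'s together with the signs from the resultant swaps. Indices $\nu$ with $n_\nu=1$ contribute trivially, since Lemma~\ref{Lemma 2.4} then gives $\Delta(F,f_\nu)(a_{\nu,1},t)_0=1$ and the exponents $n_\nu-1$ on the right-hand side vanish.

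The main obstacle is the cancellation step at the end of the first computation: one needs to verify carefully that the factor $g_{\nu\ell}(a_{\nu\ell})^{n_\nu(n_\nu-1)}$ coming from Lemma~\ref{Lemma 3.6} applied in the shifted variable matches exactly the denominator $\prod_{\ell'\ne\ell}(a_{\nu\ell}-a_{\nu\ell'})^{n_\nu(n_\nu-1)}$ introduced by Proposition~\ref{Proposition 2.5}. This is what ensures that the final formula depends only on the data $f_\nu$, $\delta_{\nu,n(\nu)}$, $\alpha_\nu$ and the other $f_{\nu'}$, without any residual algebraic dependence on the individual roots $a_{\nu\ell}$. Once this is in place, the remaining exponent arithmetic on the pairwise resultants is routine.
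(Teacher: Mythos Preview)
Your proposal is correct and follows essentially the same approach as the paper: shift $x\mapsto x+a_{\nu\ell}$ to reduce to the linear factor, apply Lemma~\ref{Lemma 3.6}, use Proposition~\ref{Proposition 2.5} to pass from $\Delta(F,x-a_{\nu\ell})$ to $\Delta(F,f_\nu)$, then take products and simplify the resultant exponents exactly as you do. You are more explicit than the paper about the cancellation of $g_{\nu\ell}(a_{\nu\ell})^{n_\nu(n_\nu-1)}$ and the triviality of the $n_\nu=1$ terms, but the substance is the same.
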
 

\begin{proof} First we note that $\alpha_\nu(a_{\nu \ell})\delta_{\nu,n(\nu)}(a_{\nu \ell})\not = 0$ for all $ \ell=1,...,n_\nu$. Now, by Lemma~\ref{Lemma 2.4} we have the following equality $$\Delta\big(F(x,t),x-a_{\nu \ell}\big)(a_{\nu \ell},t)_0 = \Delta\big(F(x+a_{\nu \ell},t),x\big)(0,t)_0.$$
By Lemma~\ref{Lemma 3.6} we have that
$$\Delta\big(F(x+a_{\nu \ell}),x\big)(0,t)_0=\frac{c_{\nu \ell}\cdot\delta_\nu(a_{\nu \ell})^{n_k-1}\big[\prod_{\nu '\not = \nu }f_{\nu '}(a_{\nu \ell})^{n_{\nu '}} \big]^{n_\nu-1} \big[\prod_{\ell'\not =\ell}(a_{\nu \ell'}-a_{\nu \ell})^{n_\nu }\big]^{n_\nu-1}}{\alpha_\nu (a_{\nu \ell})^{n_\nu -1}},$$
where $c_{\nu \ell} \in k-\{0\}$. By both above equations and Proposition~\ref{Proposition 2.5} we conclude

$$\prod_{\nu ,\ell}\Delta(F,f_\nu )(a_{\nu \ell},t)_0=\frac{c \prod_\nu  \res\big(f_\nu ,\delta_{\nu ,n(\nu )}^{n_\nu -1}\big)\Big[\prod_{\nu }\Big(\prod_{\nu '\not=\nu }\res(f_{\nu },f_{\nu '})^{n_{\nu '}(n_\nu -1)}\Big)\Big]}{\prod_\nu \res(f_\nu ,\alpha_\nu ^{n_\nu -1})},$$
where $c\in k-\{0\}$. Now, by Definition~\ref{Definition 3.1}
$$\begin{aligned}
\ver(F) & = \prod_{\nu,\ell}\Delta(F,f_\nu)(a_{\nu \ell},t)_0 \prod \res(f_\nu,f_{\nu'})^{2(n_\nu+n_{\nu'}-n_\nu n_{\nu'})}\\
	 & =  \frac{c \prod_\nu  \res\big(f_\nu ,\delta_{\nu ,n(\nu )}^{n_\nu -1}\big)\prod\res(f_{\nu },f_{\nu '})^{2n_{\nu '}n_\nu -n_\nu-n_{\nu'}}}{\prod_\nu \res(f_\nu ,\alpha_\nu ^{n_\nu -1})} \cdot \prod \res(f_\nu,f_{\nu'})^{2(n_\nu+n_{\nu'}-n_\nu n_{\nu'})} \\ 
	 & = \frac{c \prod_\nu  \res\big(f_\nu ,\delta_{\nu ,n(\nu )}^{n_\nu -1}\big)\prod\res(f_{\nu },f_{\nu '})^{n_\nu+n_{\nu'}}}{\prod_\nu \res(f_\nu ,\alpha_\nu ^{n_\nu -1})} \\ 
	 & = \frac{c \prod_\nu \res\big(f_\nu,\delta_{\nu,n(\nu)}^{n_\nu-1}\big)\Big[\prod_{\nu}\Big(\prod_{\nu'\not=\nu}\res(f_{\nu},f_{\nu'})^{n_{\nu'}}\Big)\Big]}{\prod_\nu\res(f_\nu,\alpha_\nu^{n_\nu-1})},
\end{aligned}$$
as desired.\end{proof}

\begin{corollary}\label{Corollary 3.8} Suppose $F(x,t)$ satisfies $(\alpha_\nu,\delta_{\nu,n(\nu)})$-Zd for each $f_\nu$ with $n_\nu>1$ and $f_\nu=\prod_{\ell=1}^{n_\nu}(x-a_{\nu \ell})$ with all $a_{\nu \ell}$ distincts. Then  $$\Delta_{A[[t]]}(F)_0=\prod_\nu \Delta(f_\nu )^{n_\nu } \cdot \frac{c \prod_\nu  \res\big(f_\nu ,\delta_{\nu ,n(\nu )}^{n_\nu -1}\big)\Big[\prod_{\nu }\Big(\prod_{\nu '\not=\nu }\res(f_{\nu },f_{\nu '})^{n_{\nu '}}\Big)\Big]}{\prod_\nu \res(f_\nu ,\alpha_\nu ^{n_\nu -1})},$$ where $c \in k - \{0\}$. \end{corollary}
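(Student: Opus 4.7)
The plan is to combine Theorem~\ref{Theorem 2.7} and Theorem~\ref{Theorem 3.7} through Definition~\ref{Definition 3.1}. First I would recognize that Theorem~\ref{Theorem 2.7} expresses $\Delta_{A[[t]]}(F)_0$ as a product of three groups of factors, and that the first and third of these groups are, by Definition~\ref{Definition 3.1}, precisely $\ver(F)$. Thus Theorem~\ref{Theorem 2.7} can be rewritten compactly as
\begin{equation*}
\Delta_{A[[t]]}(F)_0 = \prod_\nu \Delta(f_\nu)^{n_\nu}\cdot \ver(F),
\end{equation*}
so the corollary reduces to substituting the already-established closed form for $\ver(F)$ into this identity.

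Next, I would observe that the hypotheses of the corollary (the $(\alpha_\nu,\delta_{\nu,n(\nu)})$-Zd decomposition for each $f_\nu$ with $n_\nu>1$, together with distinctness of all $a_{\nu\ell}$) are exactly the hypotheses of Theorem~\ref{Theorem 3.7}. Plugging the expression supplied there for $\ver(F)$ into the rewritten Theorem~\ref{Theorem 2.7} yields the claimed formula immediately.

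The only point deserving a line of justification is that Theorem~\ref{Theorem 2.7} carries the extra hypothesis $\Delta_{A[[t]]}(F)\neq 0$ which is not explicitly posited in the corollary. This nonvanishing is however already implicit in the computations behind Theorem~\ref{Theorem 3.7}: Lemma~\ref{Lemma 3.6} produces nonzero leading coefficients $\Delta(F,f_\nu)(a_{\nu\ell},t)_0$ because the $\gcd$ condition in Definition~\ref{Definition 3.5} forces $\alpha_\nu(a_{\nu\ell})\neq 0$ and $\delta_{\nu,n(\nu)}(a_{\nu\ell})\neq 0$, while for $\nu$ with $n_\nu=1$ the corresponding factor in $\ver(F)$ is trivially $1$. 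Combined with the nonvanishing of the discriminants $\Delta(f_\nu)$ (distinct roots) and of the resultants $\res(f_\nu,f_{\nu'})$ (coprime $f_\nu$'s), the right-hand side of the rewritten Theorem~\ref{Theorem 2.7} is nonzero, so $\Delta_{A[[t]]}(F)_0\neq 0$ and a fortiori $\Delta_{A[[t]]}(F)\neq 0$; Theorem~\ref{Theorem 2.7} then applies. Given this, I do not anticipate any genuine obstacle; the corollary is essentially a bookkeeping concatenation of Theorems~\ref{Theorem 2.7} and~\ref{Theorem 3.7}.
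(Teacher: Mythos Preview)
Your approach is essentially identical to the paper's: its proof consists of exactly the two steps you outline, namely rewriting Theorem~\ref{Theorem 2.7} via Definition~\ref{Definition 3.1} as $\Delta_{A[[t]]}(F)_0 = \prod_\nu \Delta(f_\nu)^{n_\nu}\cdot \ver(F)$ and then substituting the formula from Theorem~\ref{Theorem 3.7}. Your extra paragraph on the hypothesis $\Delta_{A[[t]]}(F)\neq 0$ goes beyond what the paper does (it is silent on this point), but note that as written it is slightly circular---you invoke the identity of Theorem~\ref{Theorem 2.7} to deduce its own hypothesis; a cleaner fix is to observe directly from the Puiseux computation in the proof of Lemma~\ref{Lemma 3.6} that the $t$-roots near each $a_{\nu\ell}$ are pairwise distinct.
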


\begin{proof} By Theorem~\ref{Theorem 2.7} and Definition~\ref{Definition 3.1} we have that $$\Delta_{A[[t]]}(F)_0= \prod_\nu \Delta(f_\nu )^{n_\nu } \cdot \ver(F).$$ Now, the desired result follows by using Theorem~\ref{Theorem 3.7}. \end{proof}

\begin{remark}{\label{Remark 3.9}} By Remark~\ref{Remark 2.8} a family of polynomials of the third kind is not a regular family. A family of polynomials satisfying the hypothesis of Theorem~\ref{Theorem 3.7} is not a regular family in general.\end{remark}

\section{The general type of a family of polynomials}

Let $F(t)=\sum_{i \geq 0} F_i(x)t^i,$ be a family of polynomials of degree $n'$ with coefficients in $A$, a domain and $k$-algebra with $k$ a algebraically closed field of characteristic zero. Let $F_0=\prod_\nu f_\nu(x)^{n_\nu}$, where $f_\nu$ are coprime and monic polynomials. Let $s_1(t),...,s_{n'}(t)$ be the $t$-roots of $F$ as before. The constant term of $s_i$ will be denoted by $s_i(0).$

\begin{definition}\label{Definition 4.1} Let $m(i,j)$ be the order of $s_i-s_j$, that is, $$s_i-s_j=(s_i-s_j)_0t^{m(i,j)} + \cdots $$ where $(s_i-s_j)_0\not = 0$ is the leading coefficient of $s_i-s_j$.\end{definition}

\begin{definition}\label{Definition 4.2} Suppose $\Delta_{A[[t]]}(F)\not=0$. The \textit{type} $\tau_\nu$ of $F(t)$ with respect to $f_\nu$ is $$\tau_\nu=\lfloor n_\nu\cdot m_\nu\rfloor,$$ where $ m_\nu=\max\big\{m(i,j)\ | \ f_\nu(s_i(0))=0 \ \textrm{ and } \ f_\nu(s_j(0))=0 \big\}$. \end{definition}
 
\begin{proposition}\label{Proposition 4.3} Consider the family of polynomials  $$ F(t)=x^2A_{n-2}(x) + B_n(x)t+ O(t^2) $$ of the first kind.  The type of $F(t)$ for $x$ is $\tau=1$.   \end{proposition}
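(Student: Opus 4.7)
The plan is to compute the $t$-roots of $F(t)$ that specialize to the root $x=0$ of $f_\nu = x$, read off the order $m(i,j)$ directly from their Puiseux expansions, and then apply Definition~\ref{Definition 4.2}.

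First I would identify the relevant data from the setup. Since $F_0 = x^2 A_{n-2}(x)$ with $A_{n-2}(0) \neq 0$ and $A_{n-2}$ square-free, the largest exponent $d$ with $x^d \mid F_0$ is $d = 2$, so exactly two $t$-roots of $F$ specialize to $0$, and $n_\nu = 2$ for $f_\nu = x$. The remaining $n-2$ roots specialize to the nonzero roots of $A_{n-2}$, which are irrelevant here.

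Next I would compute the Newton polygon of $F(t)$ near $x=0$. The two lattice points on the leading edge through $(0,2)$ are $(0,2)$, coming from the $A_{n-2}(0)x^2$ term, and $(1,0)$, coming from the $B_n(0)t$ term (note $B_n(0) \neq 0$). Hence the slope is $-1/m_1$ with $m_1 = 1/2$, and the first truncation is
\[
F_1' = A_{n-2}(0)\,x^2 + B_n(0)\,t,
\]
so $P_{F_1'}(s) = A_{n-2}(0)\,s^2 + B_n(0)$. Because $A_{n-2}(0)B_n(0) \neq 0$, this quadratic has two distinct nonzero roots $\alpha_{1,1}, \alpha_{1,2}$. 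The two branches therefore take the form
\[
s_i = t^{1/2}\bigl(\alpha_{1,i} + O(t^{m_{2,i}})\bigr), \qquad i=1,2,
\]
and their difference is $s_1 - s_2 = (\alpha_{1,1} - \alpha_{1,2})\,t^{1/2} + \cdots$, with leading coefficient nonzero. Thus $m(1,2) = 1/2$.

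Finally, since there is only one unordered pair of $t$-roots specializing to the root of $f_\nu = x$, we have
\[
m_\nu = \max\{m(i,j)\} = \tfrac{1}{2},
\]
and plugging into Definition~\ref{Definition 4.2} gives $\tau_\nu = \lfloor n_\nu \cdot m_\nu \rfloor = \lfloor 2 \cdot \tfrac{1}{2}\rfloor = 1$, as claimed. There is no real obstacle here; the only subtlety is to notice that the hypotheses of the first kind ($A_{n-2}(0)B_n(0) \neq 0$) are precisely what guarantees that $m_1 = 1/2$ is indeed the slope of the leading edge (so no lower lattice point intrudes) and that the two leading coefficients of the branches are distinct (so that $m(1,2) = 1/2$ rather than something larger).
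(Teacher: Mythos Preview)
Your proof is correct and follows essentially the same route as the paper: both compute the first truncation $F_1' = A_{n-2}(0)x^2 + B_n(0)t$, extract the two distinct roots of $P_{F_1'}(s)$ to get branches $s_i = \alpha_i t^{1/2} + \cdots$, read off $m(1,2)=1/2$, and conclude $\tau = \lfloor 2\cdot \tfrac12\rfloor = 1$. Your argument is in fact a bit more explicit about the Newton polygon and about why the hypothesis $A_{n-2}(0)B_n(0)\neq 0$ forces $\alpha_{1,1}\neq\alpha_{1,2}$, which is a nice touch.
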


\begin{proof} If $A_{n-2}(x)=\prod_{i}(x-a_i)$, with $a_i\not = 0$ for all $i$ and $a_i\not = a_j$ for all $i\not = j$, then the $t$-roots of $F(t)$ are the following $$s_1=0+\alpha_1t^{1/2}+\cdots, s_2=0+\alpha_2t^{1/2}+\cdots,s_{3}=a_1+\cdots ,..., s_{i+2}=a_i+\cdots $$ where $\alpha_1,\alpha_2$ are the roots of $P_{F_1'}(s)=s^2A_{n-2}(0)+B_n(0)$. This implies that $t$ the type of $F$ for $x$ is $$\tau=\lfloor2\cdot m \rfloor,$$ where $$m=\max\big\{m(i,j)\ | \ x(s_i(0))=0 \ \textrm{ and } \ x(s_j(0))=0 \big\}=m(1,2)=1/2. $$ So $\tau=1$.\end{proof}

\begin{proposition}\label{Proposition 4.4} Consider the family of polynomials $$ F(t)=x^2A_{n-2}(x) + 2xB_{n-1}(x)t + C_n(x)t^2 + O(t^3)$$ of the second kind. The type of $F(t)$ for $x$ is $\tau=2$. \end{proposition}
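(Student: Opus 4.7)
The plan is to follow the template of Proposition~\ref{Proposition 4.3}: identify the branches of $F(t)$ whose constant term vanishes and read off $m_\nu$ for $f_\nu = x$ from the leading coefficients of their differences.

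First I would examine the Newton polygon of $F(t)$. Since $A_{n-2}(0)\neq 0$ we have $d=2$, and the hypothesis $B_{n-1}^2(0)-A_{n-2}(0)C_n(0)\neq 0$ guarantees that the leading edge through $(0,2)$ descends with slope $-1$ (in particular $B_{n-1}(0)$ and $C_n(0)$ do not vanish simultaneously). Hence $m_1=1$ and the first truncation is
$$F_1'=A_{n-2}(0)x^2+2B_{n-1}(0)tx+C_n(0)t^2,$$
so $P_{F_1'}(s)=A_{n-2}(0)s^2+2B_{n-1}(0)s+C_n(0)$.

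Next, the discriminant of $P_{F_1'}(s)$ equals $4\bigl(B_{n-1}^2(0)-A_{n-2}(0)C_n(0)\bigr)$, which is nonzero by hypothesis; so $P_{F_1'}$ has two distinct roots $\alpha_1,\alpha_2$. Consequently the two $t$-roots of $F(t)$ with vanishing constant term are
$$s_1=\alpha_1 t+\cdots,\qquad s_2=\alpha_2 t+\cdots,$$
while the remaining branches $s_3,\ldots,s_n$ have nonzero constant terms, namely the distinct roots of $A_{n-2}(x)$ (since $A_{n-2}$ is square-free and $A_{n-2}(0)\neq 0$).

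Finally, because $\alpha_1\neq\alpha_2$ we get $s_1-s_2=(\alpha_1-\alpha_2)t+O(t^2)$, so $m(1,2)=1$. As $s_1, s_2$ are the only branches satisfying $x(s_i(0))=0$, we conclude $m_\nu=m(1,2)=1$, and therefore
$$\tau=\lfloor n_\nu\cdot m_\nu\rfloor=\lfloor 2\cdot 1\rfloor=2,$$
as claimed. The calculation is essentially routine; the only mild subtlety is handling the degenerate subcases where one of $B_{n-1}(0)$ or $C_n(0)$ vanishes (making one $\alpha_i$ equal to $0$ and its branch of strictly higher $t$-order), but even in those cases the two roots of $P_{F_1'}$ stay distinct, so $s_1-s_2$ still has leading term proportional to $t^1$ and $m(1,2)=1$ still holds.
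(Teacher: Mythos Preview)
Your proof is correct and follows essentially the same route as the paper: compute the first truncation, observe that the two roots $\alpha_1,\alpha_2$ of $P_{F_1'}$ are distinct (because the discriminant $4(B_{n-1}^2(0)-A_{n-2}(0)C_n(0))$ is nonzero), and conclude $m(1,2)=1$, hence $\tau=2$. Your treatment is in fact slightly more careful than the paper's, which does not comment on the degenerate subcases $B_{n-1}(0)=0$ or $C_n(0)=0$; your remark that the roots of $P_{F_1'}$ remain distinct in those cases (so the order of $s_1-s_2$ is still~$1$) is correct, though the parenthetical ``making one $\alpha_i$ equal to $0$'' strictly applies only to the case $C_n(0)=0$.
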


\begin{proof} If $A_{n-2}(x)=\prod_{i}(x-a_i)$, with $a_i\not = 0$ for all $i$ and $a_i\not = a_j$ for all $i\not = j$, then the $t$-roots of $F(t)$ are the following $$s_1=0+\alpha_1t^{1}+\cdots, s_2=0+\alpha_2t^{1}+\cdots,s_{3}=a_1+\cdots ,..., s_{i+2}=a_i+\cdots $$ where $\alpha_1,\alpha_2$ are the roots of $P_{F_1'}(s)=s^2A_{n-2}(0)+2B_{n-1}(0)+C_n(0)$. This implies that $\tau$ the type of $F$ for $x$ is $$\tau=\lfloor2\cdot m \rfloor,$$ where $$m=\max\big\{m(i,j)\ | \ x(s_i(0))=0 \ \textrm{ and } \ x(s_j(0))=0 \big\}=m(1,2)=1. $$ So $\tau=2$.\end{proof}

\begin{proposition}\label{Proposition 4.5} Consider the family of polynomials$$ F(t)=a_0x^2+a_1x^3+2b_0xt+b_1x^2t+c_0t^2+c_1xt^2+d_1t^3+O(t^4)$$ of the third kind. The type of $F(t)$ for $x$ is $\tau=3$. \end{proposition}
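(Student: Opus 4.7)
The plan is to mirror Propositions~\ref{Proposition 4.3} and~\ref{Proposition 4.4}: produce the Puiseux expansions of those $t$-roots of $F$ satisfying $s_i(0)=0$, measure the order $m(i,j)$ of their pairwise differences, and apply Definition~\ref{Definition 4.2} with $n_\nu=2$. The only new feature compared to the first and second kinds is that the first truncation of $F$ does not separate the two branches over $x=0$, so one extra step in the Puiseux algorithm is needed, and this extra step has already been carried out in the proof of Proposition~\ref{Proposition 3.4}, which I would simply reuse.

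Concretely, I would recall from that proof that $F_1'=a_0x^2+2b_0xt+c_0t^2$ has $P_{F_1'}(s)=a_0s^2+2b_0s+c_0$ with a double root $\alpha=-b_0/a_0$, because $b_0^2-a_0c_0=0$. After the substitution $x=t(x_1-b_0/a_0)$, one obtains $F=t^2G(x_1,t)$ whose first truncation is $G_1'=a_0x_1^2+\gamma\,t$ with
$$\gamma=d_1-\frac{c_1b_0}{a_0}+\frac{b_1b_0^2}{a_0^2}-\frac{a_1b_0^3}{a_0^3}=\frac{-a_1b_0^3+b_1b_0^2a_0-c_1b_0a_0^2+d_1a_0^3}{a_0^3}.$$
The nondegeneracy hypothesis forces $\gamma\neq 0$, so the Newton polygon of $G$ has a second edge of slope $-1/2$, and $P_{G_1'}(s)=a_0s^2+\gamma$ has two distinct roots $\alpha_1\neq\alpha_2$.

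Unwinding $x=t(x_1-b_0/a_0)$, the two $t$-roots of $F$ vanishing at $t=0$ take the form
$$s_i=-\frac{b_0}{a_0}\,t+\alpha_i\,t^{3/2}+\cdots\qquad(i=1,2),$$
while the remaining $t$-roots come from the factor $a_1x+a_0$ of $F_0/x^2$ and thus have nonzero constant term, so they do not enter the maximum of Definition~\ref{Definition 4.2}. Hence the only pair contributing is $(s_1,s_2)$, and $s_1-s_2=(\alpha_1-\alpha_2)t^{3/2}+\cdots$ gives $m(1,2)=3/2$. Therefore $\tau=\lfloor 2\cdot 3/2\rfloor=3$.

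The only real obstacle is bookkeeping the fractional exponent through the substitution: the new branch variable $x_1$ picks up exponent $1/2$ from the second Newton polygon but is multiplied by $t$ to recover $x$, so the branches of $F$ separate at order $t^{3/2}$ rather than $t^{1/2}$. This is precisely what bumps the type up from $2$ (second kind) to $3$ (third kind), and is the sole substantive point to verify; everything else reduces to the two propositions already proved.
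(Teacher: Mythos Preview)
Your proposal is correct and follows essentially the same route as the paper: both arguments invoke the Puiseux computation already carried out in Proposition~\ref{Proposition 3.4}, obtain the two branches $s_i=-\tfrac{b_0}{a_0}t+\alpha_i t^{3/2}+\cdots$ over $x=0$ together with a third root with nonzero constant term, and conclude $m(1,2)=3/2$, hence $\tau=\lfloor 2\cdot 3/2\rfloor=3$. The only cosmetic difference is that you spell out the bookkeeping of the fractional exponent through the substitution more explicitly than the paper does.
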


\begin{proof} Since $F_0(x)=a_1x^2(x+a_0/a_1)$, then the $t$-roots of $F(t)$ are the following $$s_1=0+\alpha t^{1}+\alpha_1t^{1+1/2}+\cdots, s_2=0+\alpha t^{1}+\alpha_2t^{1+1/2}+\cdots,s_{3}=-a_0/a_1+\cdots  $$ where $\alpha$ is the only root of $P_{F_1'}(s)=a_0s^2+2b_0s+c_0$ and $\alpha_1,\alpha_2$ are the two distincts roots of $$P_{G_1'}(s)=s^2a_0+a_1\Big(\frac{-b_0}{a_0}\Big)^3+b_1\Big(\frac{-b_0}{a_0}\Big)^2+c_1\Big(\frac{-b_0}{a_0}\Big)+d_1.$$ This implies that $\tau$ the type of $F$ for $x$ is $$\tau=\lfloor2\cdot m \rfloor,$$ where $$m=\max\big\{m(i,j)\ | \ x(s_i(0))=0 \ \textrm{ and } \ x(s_j(0))=0 \big\}=m(1,2)=3/2. $$ So $\tau=3$.\end{proof}

\begin{proposition}\label{Proposition 4.6} Let $F(t)$ be a family of polinomials that satisfies $(\alpha,\delta_{n})$-Zd for $f_\nu=(x-a_{\nu})$. The type of $F(t)$ for $f_\nu$ is $\tau_\nu=n$.  \end{proposition}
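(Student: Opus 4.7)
The plan is to show $m_\nu = n/n_\nu$; it then follows immediately that $\tau_\nu = \lfloor n_\nu\cdot n/n_\nu\rfloor = n$. After the translation $x\mapsto x+a_\nu$ I may assume $a_\nu=0$, so $f_\nu=x$, and the task reduces to: among the $n_\nu$ $t$-roots $s_1,\dots,s_{n_\nu}$ of $F$ with $s_i(0)=0$, every difference $s_i-s_j$ with $i\neq j$ has order exactly $n/n_\nu$.

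The starting input is the Zeuthen decomposition $\alpha F = P^{n_\nu} + \delta_n t^n + O(t^{n+1})$ with $P:=x\beta_0+\beta_1 t+\cdots+\beta_{n-1}t^{n-1}$. Comparing $t^0$-parts yields $\beta_0^{n_\nu}=\alpha\prod_{\nu'\neq\nu}f_{\nu'}^{n_{\nu'}}$, and the hypotheses $\gcd(\alpha,f_\nu)=1$ together with $f_{\nu'}(0)\neq 0$ force $\beta_0(0)\neq 0$. Since $P_x(0,0)=\beta_0(0)\neq 0$, the implicit function theorem for formal power series produces a unique $c(t)\in k[[t]]$ with $c(0)=0$ and $P(c(t),t)=0$; this $c(t)$ will serve as the common part of the $n_\nu$ Puiseux expansions. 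Factoring $P(x,t)=(x-c(t))\,Q(x,t)$, one has $Q(0,0)=\beta_0(0)\neq 0$.

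Setting $y:=x-c(t)$ and evaluating along a root $s_i$ with $s_i(0)=0$, the Zeuthen identity becomes
$$y^{n_\nu}\,Q(s_i,t)^{n_\nu}=-\delta_n(s_i)\,t^n+O(t^{n+1})=-\delta_n(0)\,t^n\bigl(1+O(t^{\epsilon})\bigr),$$
and since $Q(s_i,t)=\beta_0(0)+O(t^{\epsilon})$ is a unit, extracting $n_\nu$-th roots in the Puiseux closure produces
$$s_i-c(t)=\zeta_i\cdot\Bigl(\tfrac{-\delta_n(0)}{\beta_0(0)^{n_\nu}}\Bigr)^{1/n_\nu}t^{n/n_\nu}+o(t^{n/n_\nu}),$$
with $\zeta_1,\dots,\zeta_{n_\nu}$ the $n_\nu$ distinct $n_\nu$-th roots of unity. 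Subtracting then gives $s_i-s_j=(\zeta_i-\zeta_j)\,C\,t^{n/n_\nu}+o(t^{n/n_\nu})$, of order exactly $n/n_\nu$ for $i\neq j$, so $m_\nu=n/n_\nu$ and $\tau_\nu=n$.

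The step I expect to require most care is the $n_\nu$-th-root extraction: I must check that the construction above really produces all $n_\nu$ branches of $F$ reducing to $0$ (and no others), and that their leading terms are genuinely of order $t^{n/n_\nu}$ rather than accidentally higher. Both can be justified by adapting the Newton-polygon/truncation procedure of the proof of Lemma~\ref{Lemma 3.6} to the equation in $y$ obtained after subtracting the common part $c(t)$: its leading edge has slope $-n_\nu/n$, with associated characteristic polynomial $\beta_0(0)^{n_\nu}s^{n_\nu}+\delta_n(0)$, whose $n_\nu$ roots are simple and therefore pin down both the count and the order of the branches.
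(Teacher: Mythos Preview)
Your argument is correct. The route differs slightly from the paper's: the paper simply invokes the iterative Newton--polygon procedure of Lemma~\ref{Lemma 3.6}, which builds the common initial segment of the $n_\nu$ branches one term at a time (the substitutions $x=t^{i_1}(\phi_1+x_1)$, $x_1=t^{i_2}(\phi_2+x_2)$, \ldots) until the final truncation $\beta_0(0)^{n_\nu}s^{n_\nu}+\delta_n(0)$ separates them at total order $n/n_\nu$. You replace that recursion by a single application of Hensel's lemma/the implicit function theorem to produce the entire common part $c(t)$ at once, factor $P=(x-c(t))Q$, and then read off the separation order from one Newton--polygon step on the $y$-equation. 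Conceptually these are the same (the paper's iteration is a constructive proof of the Hensel step you invoke), but your packaging is cleaner and makes the role of $c(t)$ as the shared initial expansion explicit; the paper's version has the advantage of being entirely self-contained within the truncation machinery already set up in Lemma~\ref{Lemma 3.6}. Your closing remark about the characteristic polynomial $\beta_0(0)^{n_\nu}s^{n_\nu}+\delta_n(0)$ having simple roots is exactly what is needed to guarantee that the $n_\nu$ values of $\zeta_i$ are distinct, so that $m(i,j)=n/n_\nu$ for every pair and hence $m_\nu=n/n_\nu$.
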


\begin{proof}  We may assume, without loss of generality, $a_{\nu}=0$. Define $G(t)=\alpha F(t)$. By hypothesis, $$G=\Big(x\beta_{0}+\beta_1t+\cdots +\beta_{n-1}t^{n-1}\Big)^{n_\nu}+\delta_{n}t^n+O(t^{n+1}).$$ By the proof of Lemma~\ref{Lemma 3.6} we have that $$s_k=t^{i_1/n_\nu}(\phi_1+t^{i_2/n_\nu}(\phi_2+\cdots +(\phi_{j}+t^{(n-i_1-i_2-\cdots-i_j)/n_\nu}(\alpha_{k}+\cdots $$ for $k=1,...,n_\nu$ and $f_\nu(s_k(0))\not = 0$ for $k>n_\nu$. This implies that for $1\leq k,k' \leq n_\nu$ with $k\not = k'$ we have that $$s_k-s_{k'}=(\alpha_{k}-\alpha_{k'})t^{n/n_\nu}+\cdots $$ So $$m_\nu=\max \big\{m(k,k') \ | \ f_\nu(s_k(0))=0 \ \textrm{ and } f_\nu(s_{k'}(0))=0 \big\}= n/n_\nu $$ and $\tau_\nu =n$.\end{proof}

\begin{theorem}\label{Theorem 4.7} Let $F(t)$ be a family of polynomials such that $\Delta_{A[[t]]}(F)\not=0$ and let $\tau_\nu$ be the type of $F(t)$ for $f_\nu$ and consider $\tau:=\max_\nu\{\tau_\nu\}$. The vertex $\ver(F)$ depends only on the first $\tau$ terms of the series $F(t)$.\end{theorem}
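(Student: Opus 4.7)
The plan is to peel off the contributions to $\ver(F)$ that depend on $F_0$ alone, and then bound the $t$-degree of $F$ needed to recover the Puiseux data that determines the remaining factors. First I would combine Definition~\ref{Definition 3.1}, Proposition~\ref{Proposition 2.5}, and Lemma~\ref{Lemma 2.4} (applied to the enlarged index $(\nu,\ell)$) to write
$$
\ver(F) \;=\; C(F_0)\cdot \prod_{\nu,\ell}\;\prod_{1\le i<j\le n_\nu}(s_{\nu\ell i}-s_{\nu\ell j})_{0}^{\,2},
$$
where $s_{\nu\ell 1},\dots,s_{\nu\ell n_\nu}$ are the $t$-roots of $F$ converging to $a_{\nu\ell}$, and where $C(F_0)$ is a combination of resultants $\res(f_\nu,f_{\nu'})^{\,\cdot}$ and differences $(a_{\nu\ell}-a_{\nu'\ell'})^{\,\cdot}$ that is manifestly a function of $F_0$ alone.

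By Definitions~\ref{Definition 4.1} and \ref{Definition 4.2}, each $(s_{\nu\ell i}-s_{\nu\ell j})_0$ is the coefficient of $t^{m(i,j)}$ in the Puiseux series $s_{\nu\ell i}-s_{\nu\ell j}$, and $m(i,j)\le m_\nu$. Thus it suffices to prove the following technical claim: \emph{the branches of $F$ in the cluster of multiplicity $n_\nu$ at $a_{\nu\ell}$, truncated at $t$-order $m_\nu$, depend only on $F_0,F_1,\dots,F_{\tau_\nu}$.} Taking maxima over $\nu$ then gives dependence on at most $F_0,\dots,F_\tau$, as desired.

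I would prove the claim by induction on the depth of the Newton--Puiseux iteration. After translating $x\mapsto x+a_{\nu\ell}$, the leading edge of the Newton polygon through $(0,n_\nu)$ has slope $-1/m_1$ with $m_1\le m_\nu$, and ends at a lattice point $(i_0,j_0)$ satisfying $i_0=m_1(n_\nu-j_0)\le n_\nu m_1\le n_\nu m_\nu$. Therefore $F_1'$, $P_{F_1'}$ and the leading coefficients $\alpha_{1,k}$ are determined by $F_0,\dots,F_{\lfloor n_\nu m_1\rfloor}$. The substitution $x=t^{m_1}(x_1+\alpha_{1,k})$ followed by division by $t^{n_\nu m_1}$ produces a new family $G(x_1,t)$ whose coefficient of $t^{a}x_1^{k}$ is a linear combination of those $F$-coefficients $a_{i,j}$ with $i+m_1 j = a+n_\nu m_1$ and $j\ge k$; in particular $i\le a+n_\nu m_1$. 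Applying the induction hypothesis to $G$ with its new multiplicity $r\le n_\nu$ and residual order $m_\nu-m_1$, we need $G$-coefficients up to $a\le r(m_\nu-m_1)$, which pulls back to $F$-coefficients with
$$
i\;\le\; r(m_\nu-m_1)+n_\nu m_1 \;=\; r\,m_\nu + m_1(n_\nu-r) \;\le\; n_\nu m_\nu,
$$
using $m_1\le m_\nu$. Since $i$ is an integer, $i\le\lfloor n_\nu m_\nu\rfloor=\tau_\nu$, closing the induction.

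The main obstacle is the bookkeeping in the inductive step: one has to handle the case when the leading edge ends at $(i_0,j_0)$ with $j_0>0$, so that the cluster splits into sub-branches whose first slopes are read off from subsequent Newton edges rather than a single one, and one must verify that the telescoping bound $r\,m_\nu+m_1(n_\nu-r)\le n_\nu m_\nu$ survives across all iterations (including fractional slopes and drops in multiplicity). Once this estimate is in place, combining it with the reduction in the first paragraph yields the theorem.
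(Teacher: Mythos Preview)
Your approach is essentially the same as the paper's. Both proofs (i) use Definition~\ref{Definition 3.1}, Proposition~\ref{Proposition 2.5}, and Lemma~\ref{Lemma 2.4} to reduce $\ver(F)$ to a factor depending only on $F_0$ times $\prod_{\nu,\ell}\prod_{i<j}(s_{\nu\ell i}-s_{\nu\ell j})_0^2$, and then (ii) argue that each leading coefficient $(s_{\nu\ell i}-s_{\nu\ell j})_0$ is determined by $F_0,\dots,F_{\tau_\nu}$.

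The only difference is in step (ii). The paper states in one line that the coefficient $\alpha_{\nu,k}$ of $t^{\nu_k}$ in a Puiseux branch is determined by the first $\lfloor n_\nu\cdot\nu_k\rfloor$ terms of $F$, and deduces the claim. You instead supply a detailed induction on the depth of the Newton--Puiseux process, tracking how the substitution $x=t^{m_1}(x_1+\alpha)$ followed by division by $t^{n_\nu m_1}$ transports $F$-coefficients $a_{i,j}$ to $G$-coefficients, and closing with the telescoping inequality
\[
r(m_\nu-m_1)+n_\nu m_1 \;=\; r\,m_\nu+m_1(n_\nu-r)\;\le\; n_\nu m_\nu.
\]
This is genuinely more informative than the paper's assertion: it makes precise why the bound $\lfloor n_\nu m_\nu\rfloor$ survives each step of the algorithm, including when the multiplicity drops from $n_\nu$ to $r<n_\nu$. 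The caveat you flag about the leading edge terminating at $(i_0,j_0)$ with $j_0>0$ is real but harmless: when the cluster splits along several edges, each sub-group has its own first slope $m_1^{(e)}$ and its own maximum order $m_\nu^{(e)}\le m_\nu$, and the same telescoping bound applies edge by edge with $r$ replaced by the width $j_{e-1}-j_e$ of that edge. So the induction goes through without change once one iterates over edges as well as over roots of $P_{F_1'}$.
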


\begin{proof} First we note that if $$s_j=\alpha_{\nu,0}+ \alpha_{\nu,1}t^{\nu_1}+\cdots +\alpha_{\nu,k}t^{\nu_k}+\cdots$$ is a $t$-root of $F$ such that $f_\nu(s_j(0))=0$, so we need to consider up to the first $\floor{n_\nu \cdot \nu_k}$ terms of $F(t)$ to find $\alpha_{\nu,k}$. This implies that we need to consider up to the first $\tau_\nu$ terms of $F(t)$ to obtain $(s_k-s_{k'})_0$ for $1\leq k,k'\leq n_\nu$ with $k\not= k'$, where $f_\nu(s_k(0))=0$ and $f_\nu(s_{k'}(0))=0$. By Lemma~\ref{Lemma 2.4} and Proposition~\ref{Proposition 2.5} we may conclude that we need to consider only the first $\tau_\nu$ terms to have $\Delta(F,f_\nu)(a_{\nu,l},t)_0$. Hence, by Definition~\ref{Definition 3.1} we need to consider only the first $\tau$ terms of $F(t)$ to compute $\ver(F)$. \end{proof}

\begin{definition}\label{Definition 4.8} Let $F(t)$ be a family of polynomials such that $\Delta_{A[[t]]}(F(t))\not=0$. The number $\tau$ in the Theorem~\ref{Theorem 4.7} is called the \textit{general type} of $F(t)$.\end{definition}

\begin{example}\label{Example 4.9} Consider the following regular families (\cite{Ka}), where $A:=k[y]$: \begin{enumerate} \item[$(a)$] $F(t)=x^3+F_1t+\cdots$, where $x\nmid F_1$. The general type of $F(t)$ is $\tau =1$; \item[$(b)$] $F(t)=x^3+xC_1t+\cdots $, where $x\nmid C_1$. The general type of $F(t)$ is $1$; \item[$(c)$] $F(t)=x^3+xC_2t^2+ F_3t^3+\cdots $, where $x\nmid (4C_2^3+27F_3^2)$. The general type of $F(t)$ is $3$; \item[$(d)$] $F(t)=x^2y+F_1t+\cdots $, where $x\nmid F_1$. The general type of $F(t)$ is $1$; \item[$(e)$] $F(t)=x^2y+xC_1t+F_2t^2+\cdots $, where $x\nmid(C_1^2-4F_2y)$. The general type of $F(t)$ is $2$.\end{enumerate} \end{example}

\section*{Acknowledgement} The autor would like to thank E. Esteves, S. Kleiman, N. Medeiros and I. Vainsencher  for many discussions on the subject. The autor would like to thank IMPA for its hospitality while this work was being started.

\vspace{1cm}

\textsc{Universidade Federal da Para\'iba, Centro de Ci\^encias Exatas e da Natureza, Departamento de Matem\'atica, Campus Universit\'ario, 58051-900 Jo\~aoo Pessoa PB, Brazil}

\textit{E-mail address:} \texttt{wallace@mat.ufpb.br}

\end{document}